\numberwithin{equation}{section} \theoremstyle{plain}
\newtheorem{theorem}{Theorem}[section]
\newtheorem{lemma}[theorem]{Lemma}
\theoremstyle{definition}
\theoremstyle{remark}
\numberwithin{equation}{section}
\newcommand{\Det}{\operatorname{Det}}
\newcommand{\B}{\mathcal{B}}
\newcommand{\PP}{\mathcal{P}}
\newcommand{\graph}{\operatorname{graph}}
\newcommand{\Fr}{\operatorname{Fr}}
\newcommand{\Tan}{\operatorname{tan}}
\newcommand{\Nor}{\operatorname{nor}}
\newcommand{\Dim}{\operatorname{dim}}
\newcommand{\rel}{\operatorname{rel}}
\newcommand{\Abs}{\operatorname{abs}}
\newcommand{\pr}{\operatorname{pr}}
\newcommand{\Ker}{\operatorname{ker}}
\newcommand{\Span}{\operatorname{span}}
\newcommand{\Imm}{\operatorname{Im}}
\newcommand{\Dom}{\operatorname{Dom}}
\newcommand{\ddet}{\operatorname{det}}
\newcommand{\Id}{\operatorname{Id}}
\begin{document}

\title[The gluing formula of the zeta-determinants of Dirac Laplacians]
{The gluing formula of the zeta-determinants of Dirac Laplacians for certain boundary conditions}

\author{Rung-Tzung Huang}

\address{Department of Mathematics, National Central University, Chung-Li 320, Taiwan, Republic of China}

\email{rthuang@math.ncu.edu.tw}

\author{Yoonweon Lee}

\address{Department of Mathematics, Inha University, Incheon, 402-751, Korea}

\email{yoonweon@inha.ac.kr}

\subjclass[2000]{Primary: 58J52; Secondary: 58J28, 58J50}
\keywords{gluing formula of a zeta-determinant, Dirac operator and Dirac Laplacian, odd signature operator,
absolute and relative boundary conditions, Calder\'on projector}
\thanks{The first author was supported by the National Science Council, Republic of China with the grant number NSC 102-2115-M-008-005 and
the second author was partially supported by the National Research Foundation of Korea with the grant number NRF-2012R1A1A2001086.}

\begin{abstract}
The odd signature operator is a Dirac operator which acts on the space of differential forms of all degrees and whose square is the usual Laplacian.
We extend the result of [15] to prove the gluing formula of the zeta-determinants of Laplacians acting on differential forms of all degrees
with respect to the boundary conditions ${\mathcal P}_{-, {\mathcal L}_{0}}$, ${\mathcal P}_{+, {\mathcal L}_{1}}$.
We next consider a double of de Rham complexes consisting of differential forms of all degrees with the absolute and relative boundary conditions.
Using a similar method, we prove the gluing formula of the zeta-determinants of Laplacians acting on differential forms of all degrees
with respect to the absolute and relative boundary conditions.
\end{abstract}
\maketitle

\section{Introduction}

\vspace{0.2 cm}

The zeta-determinants of Laplacians are global spectral invariants on compact Riemannian manifolds with or without boundary,
which play central roles in the theory of the analytic torsions and other related fields.
For a global invariant the gluing formula is very useful in various kinds of computations.
The gluing formula of the zeta-determinants of Laplacians was proved by
D. Burghelea, L. Friedlander and T. Kappeler in [5] by using the Dirichlet boundary condition and the Dirichlet-to-Neumann operator,
which we call the BFK-gluing formula.
Because of relations to topology the relative and absolute boundary conditions are commonly used for Hodge Laplacians.
However, the gluing formula for the zeta-determinants of Hodge Laplacians with respect to these boundary conditions
is not known yet. In this paper we discuss this problem in a weak sense. More precisely, we
prove the gluing formula for the zeta-determinants of Hodge Laplacians
acting on the space of differential forms of all degrees, not a single space of $q$-forms, with respect to the relative and absolute boundary conditions
(Theorem \ref{Theorem:4.2}).

K. Wojciechowski and S. Scott studied the zeta-determinants of Dirac Laplacians on compact Riemannian manifolds with boundary,
acting on Clifford module bundles with respect to boundary conditions belonging to the smooth self-adjoint Grassmannian
including the Atiyah-Patodi-Singer (APS) boundary condition
and the Calder\'on projector ([18], [19], [20], [26]).
Using their results and the BFK-gluing formula, P. Loya, J. Park ([16], [17]) and the second author ([15]) studied independently
the gluing formula of Dirac Laplacians
with respect to boundary conditions belonging to the smooth self-adjoint Grassmannian on compact Riemannian manifolds.

M. Braverman and T. Kappeler studied the refined analytic torsion on a closed odd dimensional Riemannian manifold
by using the odd signature operator ([3], [4]),
as an analytic analogue of the refined combinatorial torsion developed by M. Farber and V. Turaev ([6], [7], [22], [23]).
The boundary problem of the refined analytic torsion was studied by B. Vertman ([24], [25]) and the authors ([9], [10], [11]) in different ways.
Vertman used a double of de Rham complexes consisting of differential forms satisfying the absolute and relative boundary conditions.
The authors introduced well-posed boundary conditions
${\mathcal P}_{-, {\mathcal L}_{0}}$, ${\mathcal P}_{+, {\mathcal L}_{1}}$ for the odd signature operator to define the refined analytic torsion on compact Riemannian manifolds with boundary.
In [11] the authors compared these two constructions.

We note that the odd signature operator is a Dirac operator which acts on the space of differential forms of all degrees and whose square is the usual Laplacian.
In this paper we extend the result of [15] to other class of boundary conditions and discuss the gluing formula of the zeta-determinants of Laplacians
acting on the space of differential forms of all degrees with respect to
${\mathcal P}_{-, {\mathcal L}_{0}}/{\mathcal P}_{+, {\mathcal L}_{1}}$ (Theorem \ref{Theorem:1.1}) and the absolute/relative boundary conditions (Theorem \ref{Theorem:4.2}).
In case of the absolute/relative boundary conditions we are going to use the double of De Rham complexes which was used by B. Vertman in [24].

\vspace{0.3 cm}

\section{Review of the gluing formula of the zeta-determinants of Dirac Laplacians}

\vspace{0.2 cm}

In this section we review and extend the results in [15].
Let $(M, g)$ be an $m$-dimensional compact oriented Riemannian manifold with boundary $Y$ and $E \rightarrow M$ be a Hermitian vector bundle.
Choose a collar neighborhood $N$ of $Y$ which is diffeomorphic to $[0, 1) \times Y$. We assume that the metric $g$ is a product one on $N$
and the bundle $E$ has the product structure on $N$, which means that $E|_{N} = p^{\ast} (E|_{Y})$, where $p : [0, 1) \times Y \rightarrow Y$
is the canonical projection. Let ${\mathcal D}_{M}$ be a Dirac type operator acting on smooth sections of $E$ and
satisfying the following conditions : (1) On the collar neighborhood $N$ of $Y$ ${\mathcal D}_{M}$ has the following form

\begin{eqnarray}  \label{E:2.1}
{\mathcal D}_{M} & = & G \hspace{0.1 cm} \left( \partial_{u} \hspace{0.1 cm} + \hspace{0.1 cm} {\mathcal A} \right),
\end{eqnarray}

\vspace{0.2 cm}

\noindent
where $G : E|_{Y} \rightarrow E|_{Y}$ is a bundle automorphism with $G^{2} = - \Id$, $\partial_{u}$ is the inward normal derivative to $Y$ and ${\mathcal A}$ is the tangential Dirac operator.
(2) $G$ and ${\mathcal A}$ are independent of the normal coordinate $u$ and satisfy

\vspace{0.2 cm}

\begin{eqnarray}  \label{E:2.2}
& & G^{\ast} = - G, \qquad G^{2} = - \Id, \qquad {\mathcal A}^{\ast} = {\mathcal A}, \qquad G {\mathcal A} = - {\mathcal A}G    \nonumber \\
& & \Dim ( \Ker ( G - i ) \cap \Ker {\mathcal A} ) = \Dim ( \Ker ( G + i ) \cap \Ker {\mathcal A} ).
\end{eqnarray}

\vspace{0.2 cm}

\noindent
Then, on $N$, the Dirac Laplacian ${\mathcal D}_{M}^{2}$ has the following form

\begin{eqnarray}  \label{E:2.3}
{\mathcal D}_{M}^{2} & = & - \partial_{u}^{2} \hspace{0.1 cm} + \hspace{0.1 cm} {\mathcal A}^{2}.
\end{eqnarray}

\vspace{0.2 cm}

We next introduce boundary conditions on $Y$.
The Dirichlet boundary condition on $Y$ is defined by the restriction map $\gamma_{0} : C^{\infty}(M) \rightarrow C^{\infty}(Y)$,
$\gamma_{0}(\phi) = \phi|_{Y}$ and the realization ${\mathcal D}_{M, \gamma_{0}}^{2}$ is defined to be the operator ${\mathcal D}_{M}^{2}$
with the following domain

\begin{eqnarray}  \label{E:2.4}
\Dom \left( {\mathcal D}_{M, \gamma_{0}}^{2} \right) & = & \{ \phi \in C^{\infty}(M) \mid \phi|_{Y} = 0 \}.
\end{eqnarray}

\noindent
Then ${\mathcal D}_{M, \gamma_{0}}^{2}$ is an invertible operator by the unique continuation property of ${\mathcal D}_{M}$ ([12], [1]).

The APS boundary condition $\Pi_{>}$ (or $\Pi_{<}$) is defined to be the orthogonal projection onto the space spanned by the positive (or negative)
eigensections of ${\mathcal A}$. If $\Ker {\mathcal A} \neq \{ 0 \}$, $\Ker {\mathcal A}$ is an even dimensional vector space by (\ref{E:2.2}). We choose a unitary operator
$\sigma : \Ker {\mathcal A} \rightarrow \Ker {\mathcal A}$ satisfying

\begin{eqnarray}  \label{E:2.5}
\sigma G  & = & - G \sigma, \qquad \sigma^{2} \hspace{0.1 cm} = \hspace{0.1 cm} \Id_{\Ker {\mathcal A}}.
\end{eqnarray}

\vspace{0.2 cm}

\noindent
We put $\sigma^{\pm} := \frac{ I \pm \sigma}{2}$ and define $\Pi_{<, \sigma^{-}}$, $\Pi_{>, \sigma^{+}}$ by

\begin{eqnarray}  \label{E:2.6}
\Pi_{<, \sigma^{-}} & := & \Pi_{<} + \frac{1}{2} ( I - \sigma)|_{\Ker {\mathcal A}}, \qquad
\Pi_{>, \sigma^{+}} \hspace{0.1 cm} := \hspace{0.1 cm} \Pi_{>} + \frac{1}{2} ( I + \sigma)|_{\Ker {\mathcal A}}.
\end{eqnarray}

\vspace{0.2 cm}

\noindent
The realizations ${\mathcal D}_{M, \Pi_{<, \sigma^{-}}}$ and ${\mathcal D}^{2}_{M, \Pi_{<, \sigma^{-}}}$ are defined to be
${\mathcal D}_{M}$ and ${\mathcal D}^{2}_{M}$ with the following domains.

\begin{eqnarray}  \label{E:2.7}
\Dom \left( {\mathcal D}_{M, \Pi_{<, \sigma^{-}}} \right) & = & \{ \phi \in C^{\infty}(M) \mid \Pi_{<, \sigma^{-}} (\phi|_{Y}) = 0 \} \nonumber \\
\Dom \left( {\mathcal D}^{2}_{M, \Pi_{<, \sigma^{-}}} \right) & = & \{ \phi \in C^{\infty}(M) \mid \Pi_{<, \sigma^{-}} (\phi|_{Y}) = 0,
\quad  \Pi_{<, \sigma^{-}} (( {\mathcal D}_{M} \phi)|_{Y}) = 0 \}.
\end{eqnarray}

\vspace{0.2 cm}

\noindent
${\mathcal D}_{M, \Pi_{>, \sigma^{+}}}$ and ${\mathcal D}^{2}_{M, \Pi_{>, \sigma^{+}}}$ are defined similarly.
The Calder\'on projector ${\mathcal C}$ is defined to be the orthogonal projection from $L^{2}(E|_{Y})$ onto the closure of
$\{ \phi|_{Y} \mid \phi \in C^{\infty}(M),  {\mathcal D}_{M} \phi = 0\}$ called the Cauchy data space.

\vspace{0.2 cm}

As a generealization of the APS boundary condition, K. Wojciekowski and B. Booss introduced the smooth self-adjoint Grassmannian
$Gr^{\ast}_{\infty}({\mathcal D}_{M})$ ([2], [20], [26]),
which is the set of all orthogonal pseudodifferential projections $P$ such that

\begin{eqnarray}    \label{E:2.8}
-GPG  = I - P, \qquad P - \Pi_{>} \hspace{0.2 cm} \text{is a classical pseudodifferential operator of order} \hspace{0.2 cm} - \infty.
\end{eqnarray}

\vspace{0.2 cm}

\noindent
Clearly, $\Pi_{>, \sigma^{+}}$ belongs to $Gr^{\ast}_{\infty}({\mathcal D}_{M})$. It was known by S. Scott ([18]) and G. Grubb ([8]) that ${\mathcal C}$
belongs to $Gr^{\ast}_{\infty}({\mathcal D}_{M})$.
The realizations ${\mathcal D}_{M, P}$ and ${\mathcal D}^{2}_{M, P}$ are similarly defined as (\ref{E:2.7}) by simply replacing $\Pi_{<, \sigma^{-}}$
with $P$.

\vspace{0.2 cm}

Since $G$ is a bundle automorphism on $E|_{Y}$ with $G^{2} = - \Id$, $E|_{Y}$ splits onto $\pm i$-eigenspaces $E_{Y}^{\pm}$, say,
$E|_{Y} = E_{Y}^{+} \oplus E_{Y}^{-}$ and the Dirac operator ${\mathcal D}_{M}$ can be written, near the boundary $Y$, by

\begin{eqnarray}   \label{E:2.9}
{\mathcal D}_{M} & = & \left( \begin{array}{clcr} i & 0 \\ 0 & - i \end{array} \right) \left( \partial_{u} +
\left( \begin{array}{clcr} 0 & {\mathcal A}^{-} \\ {\mathcal A}^{+} & 0 \end{array} \right) \right),
\end{eqnarray}

\noindent
where ${\mathcal A}^{\pm} := {\mathcal A}|_{C^{\infty}(E_{Y}^{\pm})} :  C^{\infty}(E_{Y}^{\pm}) \rightarrow C^{\infty}(E_{Y}^{\mp})$ and
$({\mathcal A}^{\pm})^{\ast} = {\mathcal A}^{\mp}$.
For $P \in Gr^{\ast}_{\infty}({\mathcal D}_{M})$, there exists a unitary operator $U_{P} : L^{2}(E_{Y}^{+}) \rightarrow L^{2}(E_{Y}^{-})$
such that $\graph (U_{P}) = \Imm P$. For simplicity we write $U_{{\mathcal C}} = K$.
By (\ref{E:2.8}) we have

\begin{eqnarray}  \label{E:2.10}
U_{P} & = & K \hspace{0.1 cm} + \hspace{0.1 cm} \text{a} \hspace{0.1 cm} \text{smoothing} \hspace{0.1 cm}  \text{operator}.
\end{eqnarray}

\vspace{0.2 cm}

We introduce the Neumann jump operator $Q(t) : C^{\infty}(Y) \rightarrow C^{\infty}(Y)$ for $t \geq 0$ as follows.
For $f \in C^{\infty}(Y)$ there exists a unique section $\phi \in C^{\infty}(E)$ satisfying $({\mathcal D}^{2}_{M} + t ) \phi = 0$, $\phi|_{Y} = f$.
Then we define

\begin{eqnarray}   \label{E:2.11}
Q(t)(f) & = & - (\partial_{u} \phi)|_{Y}.
\end{eqnarray}

\vspace{0.2 cm}

\noindent
The Green formula shows that $Q(t) - {\mathcal A}$ is a non-negative operator and $\Ker (Q - {\mathcal A}) = \Imm {\mathcal C}$, the Cauchy data space
(Lemma 2.5 in [15]), where $Q:= Q(0)$.
Moreover, $Q - | {\mathcal A} |$ (Theorem 2.1 in [14]) and $P - \Pi_{>}$ are smoothing operators,
which implies that $(I - P) (Q - {\mathcal A}) (I - P)$ differs from $2 \Pi_{<} | {\mathcal A} |$
by a smoothing operators. Hence the zeta determinant of $(I - P) (Q - {\mathcal A}) (I - P)$ is well defined even though
$(I - P) (Q - {\mathcal A}) (I - P)$ is not an elliptic operator.
It is not difficult to show that $\Ker (I - P) (Q - {\mathcal A}) (I - P) =\{ \psi|_{Y} \mid \psi \in \Ker {\mathcal D}_{M, P} \}$ (Lemma 2.5 in [15]).
Let $\{ h_{1}, \cdots, h_{q} \}$ be an orthonormal basis for $\Ker (I - P) (Q - {\mathcal A}) (I - P)$, where $ q = \Dim \Ker {\mathcal D}^{2}_{M, P}$.
Then there exist $\psi_{1}, \cdots, \psi_{q} \in \Ker {\mathcal D}^{2}_{M, P}$ with $\psi_{i}|_{Y} = h_{i}$.
We define a $q \times q$ positive definite Hermitian matrix $V_{M, P}$ by

\begin{eqnarray}    \label{E:2.12}
V_{M, P} := (v_{ij}), \qquad v_{ij} = \langle \psi_{i}, \psi_{j} \rangle_{M}.
\end{eqnarray}

\vspace{0.2 cm}

If ${\frak P}$ is an invertible elliptic operator of order $>0$ with discrete spectrum $\{ \lambda_{j} \mid j = 1, 2, 3, \cdots \}$,
we define the zeta function by $\zeta_{{\frak P}}(s) = \sum_{j=1}^{\infty} \lambda_{j}^{-s}$ and the zeta-determinant $\Det {\frak P}$ by
$e^{- \zeta_{{\frak P}}^{\prime}(0)}$.
If ${\frak P}$ has a non-trivial kernel, we define the modified zeta-determinant $\Det^{\ast} {\frak P}$ by

\begin{eqnarray}     \label{E:2. 13}
\Det^{\ast} {\frak P} := \Det \left( {\frak P} + \pr_{\Ker {\frak P}} \right).
\end{eqnarray}

\noindent
Similarly, if $\alpha$ is a trace class operator, we define the modified Fredholm determinant by

\begin{eqnarray}   \label{E:2.14}
\ddet^{\ast}_{\Fr} \left( I + \alpha \right) := \ddet \left( I + \alpha + \pr_{\Ker (I + \alpha)} \right).
\end{eqnarray}

\noindent
Equivalently, $\Det^{\ast} {\frak P}$ and $\ddet^{\ast}_{\Fr} \left( I + \alpha \right)$ are the determinants of ${\frak P}$ and $ I + \alpha $
when restricted to the orthogonal complements of $\Ker {\frak P}$ and $\Ker (I + \alpha)$, respectively.

\vspace{0.2 cm}

The following results are due to S. Scott and K. Wojciechowski ([19], [20], [26]), P. Loya and J. Park ([16], [17]) and the second author ([15]).

\begin{theorem} \label{Theorem:2.1}
Let $(M, g)$ be a compact oriented Riemannian manifold with boundary $Y$ having the product structure near $Y$.
We denote by ${\mathcal D}_{M}$
a Dirac type operator which has the form (\ref{E:2.1}) and satisfies (\ref{E:2.2}) near $Y$.
Let ${\mathcal P}$ be a pseudodifferential projection belonging to
$Gr^{\ast}_{\infty}({\mathcal D}_{M})$. Then :

\begin{eqnarray}  \label{E:2.99}
 \log \Det^{\ast} {\mathcal D}^{2}_{M, \PP}  -  \log \Det {\mathcal D}^{2}_{M, \gamma_{0}}
& = & \log \ddet V_{M, \PP} + \log \Det^{\ast} \left( (I - \PP) (Q - {\mathcal A}) (I - \PP) \right) ,  \\
\label{E:2.999}
\log \Det^{\ast} {\mathcal D}^{2}_{M, \PP}  -   \log \Det {\mathcal D}^{2}_{M, {\mathcal C}}  & = & 2 \log \ddet V_{M, \PP} +
2 \log | \ddet^{\ast}_{\Fr} \left( \frac{1}{2} \left( I + U_{\PP}^{-1} K \right) \right) | ,
\end{eqnarray}

\noindent
where $(I - \PP) (Q - {\mathcal A}) (I - \PP)$ is considered to be an operator defined on $\Imm (I - \PP)$.
\end{theorem}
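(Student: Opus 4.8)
The plan is to establish the two identities separately: the first, (\ref{E:2.99}), by a Burghelea--Friedlander--Kappeler type gluing argument comparing the $\PP$-realization with the Dirichlet realization, and the second, (\ref{E:2.999}), by comparing the $\PP$-realization directly with the Calder\'on realization and exploiting the fact that $\mathcal{D}^2_M$ is the square of a Dirac operator. Throughout I would use that $\mathcal{D}^2_{M,\gamma_0}$ and $\mathcal{D}^2_{M,\mathcal{C}}$ are invertible while $\mathcal{D}^2_{M,\PP}$ has a $q$-dimensional kernel, which is the source of the $\ddet V_{M,\PP}$ correction.

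For (\ref{E:2.99}) I would introduce the spectral parameter $t\geq 0$ and work with the regularized families $\mathcal{D}^2_{M,\PP}+t$ and $\mathcal{D}^2_{M,\gamma_0}+t$, both invertible for $t>0$. The key analytic input is that, although neither resolvent is individually trace class, their difference $(\mathcal{D}^2_{M,\PP}+t)^{-1}-(\mathcal{D}^2_{M,\gamma_0}+t)^{-1}$ is, because the two operators differ only through the boundary condition. Differentiating $\log\Det$ and taking the trace, I would use the Poisson operator $f\mapsto\phi$ for $\mathcal{D}^2_M+t$ from (\ref{E:2.11}) together with Green's formula to push the interior trace to a trace over $Y$ expressed through the Neumann jump operator $Q(t)$; the resulting boundary operator is exactly $(I-\PP)(Q(t)-\mathcal{A})(I-\PP)$ acting on $\Imm(I-\PP)$. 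Matching the two sides as derivatives in $t$, integrating, and fixing the integration constant from the large-$t$ asymptotic expansions would yield (\ref{E:2.99}) up to the kernel bookkeeping at $t=0$. Here the smoothing estimates recalled before the theorem, namely that $Q-\lvert\mathcal{A}\rvert$ and $\PP-\Pi_{>}$ are smoothing so that $(I-\PP)(Q-\mathcal{A})(I-\PP)$ differs from $2\Pi_{<}\lvert\mathcal{A}\rvert$ by a smoothing operator, are what make the boundary modified zeta-determinant well defined.

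The delicate step, and the main obstacle, is the limit $t\to 0$. As $t\to 0$ the operator $\mathcal{D}^2_{M,\PP}+t$ loses its lowest $q$ eigenvalues while the boundary operator $(I-\PP)(Q(t)-\mathcal{A})(I-\PP)$ simultaneously develops the kernel $\Span\{h_1,\dots,h_q\}$, where $h_i=\psi_i|_Y$ and the $\psi_i$ are the harmonic sections of (\ref{E:2.12}) (Lemma 2.5 in [15]). Passing from $\Det(\,\cdot+t)$ to $\Det^{\ast}(\,\cdot\,)$ on each side forces one to isolate these vanishing eigenvalues, and the discrepancy between the interior normalization (the $\psi_i$, with Gram matrix $V_{M,\PP}=(\langle\psi_i,\psi_j\rangle_M)$) and the boundary normalization (the orthonormal $h_i$) is precisely $\ddet V_{M,\PP}$. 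Verifying that this comparison of small-eigenvalue asymptotics produces exactly the factor $\ddet V_{M,\PP}$, with no spurious constants, is the technical heart of the argument and relies on the hypotheses (\ref{E:2.2}) on $G$ and $\mathcal{A}$.

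For (\ref{E:2.999}) I would compare the $\PP$-realization directly with the Calder\'on realization. Since $\Ker(Q-\mathcal{A})=\Imm\mathcal{C}$ is the Cauchy data space and $\mathcal{D}_{M,\mathcal{C}}$ therefore has trivial kernel by unique continuation, the operator $(I-\mathcal{C})(Q-\mathcal{A})(I-\mathcal{C})$ is invertible on $\Imm(I-\mathcal{C})$, and the comparison of the two compressions of the fixed nonnegative operator $Q-\mathcal{A}$ to the nearby subspaces $\Imm(I-\PP)$ and $\Imm(I-\mathcal{C})$ reduces to a Fredholm determinant on $Y$. Writing the Cauchy data space as $\Imm\mathcal{C}=\graph(K)$ and $\Imm\PP=\graph(U_\PP)$ in the splitting $E|_Y=E_Y^{+}\oplus E_Y^{-}$, and using (\ref{E:2.10}) to control $U_\PP-K$ by a smoothing operator, this determinant is built from $\tfrac12(I+U_\PP^{-1}K)$. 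The factor $2$ and the absolute value in (\ref{E:2.999}) then arise because $\mathcal{D}^2_M$ is the square of the Dirac operator, so that the single Scott--Wojciechowski boundary comparison of $\mathcal{D}_{M,\PP}$ is effectively taken in modulus and squared, one factor for each of the conjugate eigenbundles $E_Y^{\pm}$; correspondingly the kernel correction is doubled to $2\log\ddet V_{M,\PP}$. Keeping track of these doublings, and of the phase that is discarded on passing to the modulus, is the part of (\ref{E:2.999}) I expect to require the most care.
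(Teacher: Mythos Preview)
The paper does not actually prove Theorem \ref{Theorem:2.1}; it is quoted as an established result from Scott--Wojciechowski [19], [20], [26], Loya--Park [16], [17], and Lee [15], and no proof is given. So there is no in-paper argument to compare your proposal against. That said, the route taken in [15] (which the paper retraces almost verbatim for the analogues Theorem \ref{Theorem:2.3} and Theorem \ref{Theorem:2.4}) can be compared with your plan.

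Your outline for (\ref{E:2.99}) matches [15] closely: introduce the parameter $t$, use the BFK machinery to push the interior trace to a $Y$-trace through the Neumann jump $Q(t)$, obtain the compression $(I-\PP)(Q(t)-\mathcal{A})(I-\PP)$ on $\Imm(I-\PP)$, fix the constant from large-$t$ asymptotics, and identify $\ddet V_{M,\PP}$ in the $t\to 0$ limit from the discrepancy between the interior and boundary normalizations of the kernel.

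For (\ref{E:2.999}), however, your explanation of the doubling is not the actual mechanism, and as stated it would not close. In [15] (and visibly in the paper's proof of Theorem \ref{Theorem:2.4}), one does not invoke ``one factor for each eigenbundle $E_Y^{\pm}$.'' Instead one applies (\ref{E:2.99}) to both $\PP$ and $\mathcal{C}$, subtracts, and then rewrites $\Det^{\ast}\bigl((I-\PP)(Q-\mathcal{A})(I-\PP)\bigr)$ relative to $\Det\bigl((I-\mathcal{C})(Q-\mathcal{A})(I-\mathcal{C})\bigr)$ by parametrizing $\Imm(I-\PP)$ and $\Imm\mathcal{C}$ as graphs over $L^{2}(E_Y^{+})$. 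The factor $2$ and the absolute value come out of the purely algebraic identity for unitaries
\[
\tfrac12(I+K^{-1}U_{\PP})\cdot\tfrac12(I+U_{\PP}^{-1}K)
\;=\;\bigl(\tfrac12(I+U_{\PP}^{-1}K)\bigr)^{\ast}\bigl(\tfrac12(I+U_{\PP}^{-1}K)\bigr),
\]
so that the Fredholm determinant of the ratio of compressions equals $\bigl|\ddet_{\Fr}^{\ast}\tfrac12(I+U_{\PP}^{-1}K)\bigr|^{2}$. The second copy of $\log\ddet V_{M,\PP}$ does not come from ``the kernel correction being doubled'' in any heuristic sense; it appears separately as the finite block $\ddet\bigl(\pr_{L}(I-K)^{-1}(Q-\mathcal{A})^{-1}(I-K)\pr_{L}\bigr)$ in the factorization, which Lemma~3.1 of [15] identifies with $\ddet V_{M,\PP}$. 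Without this second, independent identification your argument for (\ref{E:2.999}) has a genuine gap.
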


\vspace{0.2 cm}

Next we extend Theorem \ref{Theorem:2.1} to a certain pseudodifferential projection ${\mathcal P}$
satisfying the following conditions.

\vspace{0.2 cm}

\noindent
{\bf Condition A}: (1) ${\mathcal P} : L^{2}(Y, E|_{Y}) \rightarrow L^{2}(Y, E|_{Y})$ is a pseudodifferential projection which gives
a well-posed boundary condition with respect to ${\mathcal D}_{M}$ in the sense of Seeley ([8], [21]).
(2) $\Imm {\mathcal P} = \graph (U_{{\mathcal P}})$, where $U_{{\mathcal P}} : L^{2}(E_{Y}^{+}) \rightarrow L^{2}(E_{Y}^{-})$ is a unitary operator.
(3) $ U_{{\mathcal P}}^{\ast} U_{\Pi_{>, \sigma^{+}}} + U_{\Pi_{>, \sigma^{+}}}^{\ast} U_{{\mathcal P}}$ is a trace class operator
and a $\Psi$DO of order at most $-1$.
(4) The zeta-determinants of $(I - \PP) (Q(t) - {\mathcal A}) (I - \PP)$ and $\PP (Q(t) - {\mathcal A}) \PP$  for $t \geq 0$ are well defined and have asymptotic expansions for $t \rightarrow \infty$ with zero constant term.

\vspace{0.2 cm}

\noindent
{\it Remark} : A pseudodifferential projection belonging to $Gr^{\ast}_{\infty}({\mathcal D}_{M})$ satisfies the items (1), (2) and (4)
but not (3) in the {\bf Condition A} above.

\vspace{0.3 cm}

\noindent
The following lemma is straightforward by $(\ref{E:2.10})$.

\begin{lemma}  \label{Lemma:2.1}
If ${\mathcal P}$ satisfies the {\bf Condition A}, then
$U_{{\mathcal P}}^{-1} K + K^{-1} U_{{\mathcal P}}$ is a trace class operator on $L^{2}(E_{Y}^{+})$.
\end{lemma}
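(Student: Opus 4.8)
The plan is to reduce everything to the relation (\ref{E:2.10}) together with Condition A(3), using only the unitarity of the operators involved and the fact that $Y = \partial M$ is a closed manifold. Since $U_{{\mathcal P}}$ and $K$ are unitary, we have $U_{{\mathcal P}}^{-1} = U_{{\mathcal P}}^{\ast}$ and $K^{-1} = K^{\ast}$, so that the operator in question is
\[
U_{{\mathcal P}}^{-1} K + K^{-1} U_{{\mathcal P}} \hspace{0.1 cm} = \hspace{0.1 cm} U_{{\mathcal P}}^{\ast} K + K^{\ast} U_{{\mathcal P}}.
\]
Tracking the domains ($K, U_{{\mathcal P}} : L^{2}(E_{Y}^{+}) \rightarrow L^{2}(E_{Y}^{-})$ and their adjoints going the other way) confirms that this is indeed an operator on $L^{2}(E_{Y}^{+})$, consistent with the claim.

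First I would invoke (\ref{E:2.10}) for the particular element $\Pi_{>, \sigma^{+}} \in Gr^{\ast}_{\infty}({\mathcal D}_{M})$: there exists a smoothing operator $S$ with $U_{\Pi_{>, \sigma^{+}}} = K + S$. Substituting this into the expression appearing in Condition A(3) and expanding gives
\[
U_{{\mathcal P}}^{\ast} U_{\Pi_{>, \sigma^{+}}} + U_{\Pi_{>, \sigma^{+}}}^{\ast} U_{{\mathcal P}}
\hspace{0.1 cm} = \hspace{0.1 cm}
\left( U_{{\mathcal P}}^{\ast} K + K^{\ast} U_{{\mathcal P}} \right) + \left( U_{{\mathcal P}}^{\ast} S + S^{\ast} U_{{\mathcal P}} \right).
\]
Thus the target operator is exhibited as the difference between the left-hand side and the error term $U_{{\mathcal P}}^{\ast} S + S^{\ast} U_{{\mathcal P}}$.

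Next I would argue that both of these pieces are trace class. The left-hand side is trace class directly by Condition A(3). For the error term, note that $S$, and hence $S^{\ast}$, is smoothing, and composing a smoothing operator with the bounded operators $U_{{\mathcal P}}^{\ast}, U_{{\mathcal P}}$ again yields a smoothing operator; since $Y$ is compact without boundary, every smoothing operator on $L^{2}(E|_{Y})$ has a smooth Schwartz kernel and is therefore trace class. Subtracting, $U_{{\mathcal P}}^{\ast} K + K^{\ast} U_{{\mathcal P}} = U_{{\mathcal P}}^{-1} K + K^{-1} U_{{\mathcal P}}$ is trace class, which is the assertion.

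There is essentially no hard step here; the only points requiring a little care are the bookkeeping of adjoints across the splitting $E|_{Y} = E_{Y}^{+} \oplus E_{Y}^{-}$ and the elementary observation that smoothing operators on the closed manifold $Y$ are trace class. I would also remark that the $\Psi$DO-of-order-$\leq -1$ clause in Condition A(3) is stronger than what this lemma needs, the trace-class property alone being sufficient; the sharper order condition is imposed because it is what enters the later determinant computations rather than this estimate.
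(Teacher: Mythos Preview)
Your argument is correct and is exactly the computation the paper has in mind when it says the lemma is ``straightforward by (\ref{E:2.10})'': apply (\ref{E:2.10}) to $\Pi_{>,\sigma^{+}}\in Gr^{\ast}_{\infty}({\mathcal D}_{M})$ to write $U_{\Pi_{>,\sigma^{+}}}=K+S$ with $S$ smoothing, substitute into Condition~A(3), and subtract off the smoothing remainder. One small remark: to justify that $U_{{\mathcal P}}^{\ast}S+S^{\ast}U_{{\mathcal P}}$ is trace class it suffices to use that smoothing operators on the closed manifold $Y$ are trace class and that trace class is a two-sided ideal in the bounded operators, so you do not actually need the composition to be smoothing (though it is, since $U_{{\mathcal P}}$ is a $\Psi$DO of order $0$ by Condition~A(1)).
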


\vspace{0.2 cm}

\noindent
The proof of the following result is a verbatim repetition of the proof of Theorem 1.1 in [15], which is an analogue of (\ref{E:2.99}).

\vspace{0.2 cm}

\begin{theorem} \label{Theorem:2.3}
Let $(M, g)$ be a compact oriented Riemannian manifold with boundary $Y$ having the product structure near $Y$.
We denote by ${\mathcal D}_{M}$
a Dirac type operator which has the form (\ref{E:2.1}) and satisfies (\ref{E:2.2}) near $Y$.
Let ${\mathcal P}$ be a well-posed boundary condition with respect to ${\mathcal D}_{M}$ satisfying the {\bf Condition A}.
Then the following equality holds.

\begin{eqnarray*}
 \log \Det^{\ast} {\mathcal D}^{2}_{M, \PP} & - & \log \Det {\mathcal D}^{2}_{M, \gamma_{0}}
\hspace{0.2 cm} = \hspace{0.2 cm} \log \ddet V_{M, \PP} + \log \Det^{\ast} \left( (I - \PP) (Q - {\mathcal A}) (I - \PP) \right)  \\
& = & \log \ddet V_{M, \PP} + \log \Det^{\ast} \left( 2 (I - \PP) (Q - {\mathcal A}) (I - \PP) \right) - \log 2 \cdot \zeta_{(I - \PP) (Q - {\mathcal A}) (I - \PP)}(0),
\end{eqnarray*}

\noindent
where $(I - \PP) (Q - {\mathcal A}) (I - \PP)$ is considered to be an operator defined on $\Imm (I - \PP)$.
\end{theorem}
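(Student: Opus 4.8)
\noindent The plan is to reproduce the argument of Theorem 1.1 in [15], checking that each analytic input it uses is supplied by {\bf Condition A}, and then to pass from the first displayed equality to the second by a one-line scaling property of the zeta-determinant. Only items (1), (2) and (4) of {\bf Condition A} enter the first equality; item (3) is reserved for the eventual analogue of (\ref{E:2.999}) and plays no role here.

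I would first work at positive spectral parameter. For $t > 0$ both $\mathcal{D}^2_{M, \PP} + t$ and $\mathcal{D}^2_{M, \gamma_0} + t$ are invertible, and the target is the relative identity
\begin{eqnarray*}
\log \Det \left( \mathcal{D}^2_{M, \PP} + t \right) - \log \Det \left( \mathcal{D}^2_{M, \gamma_0} + t \right) & = & \log \Det \left( (I - \PP)(Q(t) - \mathcal{A})(I - \PP) \right).
\end{eqnarray*}
I would establish this by differentiating in $t$: the interior derivatives are resolvent traces, and their difference collapses, through the Poisson operator and Green's formula, to a boundary trace governed by $\partial_t Q(t)$ and $\PP$, so the $t$-derivative of the difference of the two sides vanishes. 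Here well-posedness (item (1)) and the graph structure $\Imm \PP = \graph(U_{\PP})$ (item (2)) make the realizations self-adjoint and supply the required resolvent and Green's formula identities, while the fact that $(I - \PP)(Q(t) - \mathcal{A})(I - \PP)$ differs from $2 \Pi_{<} |\mathcal{A}|$ by a smoothing operator makes the right-hand zeta-determinant well defined. As the difference of the two sides is then independent of $t$, it equals the difference of the constant terms in their large-$t$ asymptotic expansions; the bulk heat coefficients cancel in this difference, and item (4) forces the constant term of the boundary determinant to vanish, so the identity holds for every $t > 0$.

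Letting $t \to 0^{+}$ gives the first equality. Since the domain of $\mathcal{D}^2_{M, \PP}$ is independent of $t$, its $q$-dimensional kernel contributes eigenvalues equal to exactly $t$, so $\log \Det (\mathcal{D}^2_{M, \PP} + t) = q \log t + \log \Det^{\ast} \mathcal{D}^2_{M, \PP} + o(1)$, while $\log \Det (\mathcal{D}^2_{M, \gamma_0} + t) \to \log \Det \mathcal{D}^2_{M, \gamma_0}$. On the boundary, $\Ker \left( (I - \PP)(Q - \mathcal{A})(I - \PP) \right)$ has dimension $q$ (Lemma 2.5 of [15]), and the $q$ eigenvalues collapsing to $0$ have leading coefficients determined by the variation formula for the Neumann jump operator, which expresses $\partial_t Q(t)$ through the $L^2(M)$ inner products of the harmonic extensions of boundary data; this yields
\begin{eqnarray*}
\log \Det \left( (I - \PP)(Q(t) - \mathcal{A})(I - \PP) \right) & = & q \log t + \log \ddet V_{M, \PP} \\
& & + \log \Det^{\ast} \left( (I - \PP)(Q - \mathcal{A})(I - \PP) \right) + o(1),
\end{eqnarray*}
with $V_{M, \PP}$ the Gram matrix of (\ref{E:2.12}). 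Equating the two expansions cancels $q \log t$ and gives the first displayed equality of the theorem.

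Finally, the second equality is a scaling statement. With $A := (I - \PP)(Q - \mathcal{A})(I - \PP)$, a nonnegative operator on $\Imm(I - \PP)$ whose zeta function is well defined because $A$ differs from $2 \Pi_{<} |\mathcal{A}|$ by a smoothing operator, one has $\zeta_{2A}(s) = 2^{-s} \zeta_A(s)$, hence $\zeta'_{2A}(0) = - \log 2 \cdot \zeta_A(0) + \zeta'_A(0)$, i.e. $\log \Det^{\ast} A = \log \Det^{\ast}(2A) - \log 2 \cdot \zeta_A(0)$; substituting into the first equality produces the second. The main obstacle is the $t \to 0^{+}$ analysis on the boundary, namely identifying the product of the leading eigenvalue-coefficients with $\ddet V_{M, \PP}$ through the variation of $Q(t)$ --- the only step demanding a genuine computation rather than a formal manipulation. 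The remaining point to verify is that the vanishing-derivative and $t \to \infty$ matching steps of [15] invoke nothing beyond items (1), (2) and (4), so that the passage from $Gr^{\ast}_{\infty}({\mathcal D}_{M})$ to {\bf Condition A} is legitimate; item (4) is designed precisely to furnish the well-definedness and the zero constant term that make this matching go through.
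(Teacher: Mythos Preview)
Your proposal is correct and matches the paper's approach exactly: the paper itself gives no argument beyond declaring that ``the proof of the following result is a verbatim repetition of the proof of Theorem~1.1 in~[15],'' and your outline is precisely a sketch of that proof together with the elementary scaling identity $\log\Det^{\ast}A=\log\Det^{\ast}(2A)-\log 2\cdot\zeta_{A}(0)$ that accounts for the second displayed equality. Your observation that only items (1), (2) and (4) of {\bf Condition~A} are invoked here is also consistent with the paper's remark that projections in $Gr^{\ast}_{\infty}({\mathcal D}_{M})$ already satisfy (1), (2) and (4), so the original argument of~[15] transfers verbatim.
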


\vspace{0.2  cm}

\noindent
Theorem \ref{Theorem:2.3} and (\ref{E:2.99}) in Theorem \ref{Theorem:2.1} lead to the following result, which
is an analogue of (\ref{E:2.999}).

\vspace{0.2 cm}

\begin{theorem} \label{Theorem:2.4}
We assume the same assumptions and notations as in Theorem \ref{Theorem:2.3}. Then :

\begin{eqnarray*}
\frac{\Det^{\ast} {\mathcal D}^{2}_{M, \PP}}{\Det {\mathcal D}^{2}_{M, {\mathcal C}}} = (\ddet V_{M, \PP})^{2} \cdot
\ddet^{\ast}_{\Fr} \left( I + \frac{1}{2} \left( U_{\PP}^{-1} K + K^{-1} U_{\PP} \right) \right) \cdot 2^{- \zeta_{(I - \PP) (Q - {\mathcal A}) (I - \PP)}(0)}.
\end{eqnarray*}
\end{theorem}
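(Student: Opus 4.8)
The plan is to apply Theorem \ref{Theorem:2.3} to $\PP$ and equation (\ref{E:2.99}) of Theorem \ref{Theorem:2.1} to the Calder\'on projector ${\mathcal C}$, and then to subtract. First I would observe that since ${\mathcal C} \in Gr^{\ast}_{\infty}({\mathcal D}_{M})$, equation (\ref{E:2.99}) applies to it verbatim. The domain of ${\mathcal D}_{M, {\mathcal C}}$ consists of sections $\phi$ with ${\mathcal C}(\phi|_{Y}) = 0$; if in addition ${\mathcal D}_{M}\phi = 0$, then $\phi|_{Y} \in \Imm {\mathcal C} \cap \Imm (I - {\mathcal C}) = \{0\}$, so by the unique continuation property $\Ker {\mathcal D}_{M, {\mathcal C}} = \{0\}$. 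Hence ${\mathcal D}^{2}_{M, {\mathcal C}}$ is invertible, $\ddet V_{M, {\mathcal C}} = 1$, and $(I - {\mathcal C})(Q - {\mathcal A})(I - {\mathcal C})$ is invertible on $\Imm(I - {\mathcal C})$, so (\ref{E:2.99}) for ${\mathcal C}$ reduces to $\log \Det {\mathcal D}^{2}_{M, {\mathcal C}} - \log \Det {\mathcal D}^{2}_{M, \gamma_{0}} = \log \Det\bigl((I - {\mathcal C})(Q - {\mathcal A})(I - {\mathcal C})\bigr)$. Subtracting this from the first line of Theorem \ref{Theorem:2.3} cancels the common Dirichlet term and gives
\[ \log \frac{\Det^{\ast} {\mathcal D}^{2}_{M, \PP}}{\Det {\mathcal D}^{2}_{M, {\mathcal C}}} = \log \ddet V_{M, \PP} + \log \frac{\Det^{\ast}\bigl((I - \PP)(Q - {\mathcal A})(I - \PP)\bigr)}{\Det\bigl((I - {\mathcal C})(Q - {\mathcal A})(I - {\mathcal C})\bigr)}. \]

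The core of the proof is to evaluate the remaining ratio of zeta-determinants. Both Neumann jump operators are perturbations of $2\Pi_{<}|{\mathcal A}|$ --- by a smoothing operator for ${\mathcal C}$ and, under {\bf Condition A}, by a trace-class operator of order $\leq -1$ for $\PP$ --- so that the comparison of their zeta-determinants carried out in [15] can be adapted. That comparison contributes a further factor $\ddet V_{M, \PP}$, accounting for the kernel of ${\mathcal D}^{2}_{M, \PP}$, together with a Fredholm determinant built from the unitaries $U_{\PP}$ and $K = U_{{\mathcal C}}$. For projections in $Gr^{\ast}_{\infty}({\mathcal D}_{M})$ this Fredholm factor is $\bigl| \ddet^{\ast}_{\Fr}\bigl(\tfrac12(I + U_{\PP}^{-1} K)\bigr)\bigr|^{2}$, but under {\bf Condition A} the operator $U_{\PP}^{-1} K$ need not be $I$ plus a trace-class operator, so this expression is a priori meaningless. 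I would bypass this by passing to the symmetric combination $\tfrac12(U_{\PP}^{-1}K + K^{-1}U_{\PP})$, which is trace class by Lemma \ref{Lemma:2.1}, via the factorization
\[ \tfrac12\bigl(I + U_{\PP}^{-1}K\bigr)\, \tfrac12\bigl(I + K^{-1}U_{\PP}\bigr) = \tfrac12\Bigl(I + \tfrac12\bigl(U_{\PP}^{-1}K + K^{-1}U_{\PP}\bigr)\Bigr), \]
which is valid because $U_{\PP}$ and $K$ are unitary, so that $U_{\PP}^{-1}K \cdot K^{-1}U_{\PP} = I$. This identifies the well-defined quantity $\ddet^{\ast}_{\Fr}\bigl(I + \tfrac12(U_{\PP}^{-1}K + K^{-1}U_{\PP})\bigr)$ as the relevant Fredholm determinant, up to the scalar $\tfrac12$ standing in front of the bracket.

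It remains to absorb that scalar $\tfrac12$. Since $\tfrac12\bigl(I + \tfrac12(U_{\PP}^{-1}K + K^{-1}U_{\PP})\bigr)$ is no longer $I$ plus a trace-class operator, I would interpret the passage from it to $I + \tfrac12(U_{\PP}^{-1}K + K^{-1}U_{\PP})$ through the zeta-regularized determinant rather than through Fredholm theory, using the scaling relation $\Det(cA) = c^{\,\zeta_{A}(0)} \Det A$. Concretely, this is exactly the difference between the two forms of Theorem \ref{Theorem:2.3}, namely $\log 2 \cdot \zeta_{(I - \PP)(Q - {\mathcal A})(I - \PP)}(0)$, and it produces the factor $2^{- \zeta_{(I - \PP)(Q - {\mathcal A})(I - \PP)}(0)}$ while combining the two occurrences of $\log \ddet V_{M, \PP}$ into $\log(\ddet V_{M,\PP})^{2}$. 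Exponentiating the resulting identity yields the asserted formula. I expect the main obstacle to be this last step: because the scalar $\tfrac12$ cannot be pulled out of a modified Fredholm determinant, the equality must be established at the level of zeta-determinants, and keeping track of the regularization is precisely what creates the $2^{-\zeta(0)}$ correction that is absent from the $Gr^{\ast}_{\infty}$ statement (\ref{E:2.999}).
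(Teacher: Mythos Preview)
Your proposal is correct and takes essentially the same approach as the paper: reduce to comparing $\Det^{\ast}\bigl((I-\PP)(Q-\mathcal{A})(I-\PP)\bigr)$ with $\Det\bigl((I-\mathcal{C})(Q-\mathcal{A})(I-\mathcal{C})\bigr)$ via Theorem~\ref{Theorem:2.3} and (\ref{E:2.99}), then adapt the factorization from [15] using your algebraic identity $\tfrac12(I+U_{\PP}^{-1}K)\cdot\tfrac12(I+K^{-1}U_{\PP})=\tfrac12\bigl(I+\tfrac12(U_{\PP}^{-1}K+K^{-1}U_{\PP})\bigr)$ to produce the symmetrized Fredholm determinant and the extra $\det V_{M,\PP}$ (the paper cites Lemma~3.1 of [15] for the latter). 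The only organizational difference is that the paper works with $2(I-\PP)(Q-\mathcal{A})(I-\PP)$ from the start---i.e.\ from the second line of Theorem~\ref{Theorem:2.3}---so that the well-defined Fredholm determinant $\det_{\Fr}^{\ast}\bigl(I+\tfrac12(K^{-1}U_{\PP}+U_{\PP}^{-1}K)\bigr)$ appears directly in the factorization without ever passing through a formally ill-defined intermediate, and the $2^{-\zeta(0)}$ factor is already isolated.
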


\begin{proof}
The proof is almost verbatim repetition of the proof of Theorem 1.2 in [15]. We here present the proof very briefly and
refer to [15] for details .
We first define $U$, $L$ by

\begin{eqnarray*}
U & = & \Imm ( I - \PP ) \cap \Imm {\mathcal C} = \Ker ( I - \PP)( Q - {\mathcal A})(I - \PP) = \{ \phi|_{Y} \mid {\mathcal D}_{M} \phi = 0, \hspace{0.1 cm} \phi|_{Y} \in \Imm (I - \PP) \}, \\
L & = & ( I - U_{\PP})^{-1}(U) = ( I + K)^{-1}(U) = \{ x \in L^{2}(E^{+}_{Y}) \mid U_{P} x = - Kx \}.
\end{eqnarray*}

\noindent
We denote by $\Imm (I - \PP)^{\ast}$ and $L^{2}(E_{Y}^{+})^{\ast}$ the orthogonal complements of $U$, $L$ so that

$$
\Imm (I - \PP) \hspace{0.1 cm} = \hspace{0.1 cm} \Imm (I - \PP)^{\ast} \oplus U, \qquad  L^{2}(E_{Y}^{+}) \hspace{0.1 cm} = \hspace{0.1 cm} L^{2}(E_{Y}^{+})^{\ast} \oplus L.
$$

\noindent
The item (3) in the {\bf Condition A} implies that $\left( I + K^{-1}U_{\PP} \right)|_{L^{2}(E^{+}_{Y})^{\ast}} : L^{2}(E_{Y}^{+})^{\ast}
\rightarrow L^{2}(E_{Y}^{+})^{\ast}$ is an invertible operator.
For simplicity, we write $\left( \left( I + K^{-1}U_{\PP} \right)|_{L^{2}(E^{+}_{Y})^{\ast}} \right)^{-1}$ by $\left( I + K^{-1} U_{\PP} \right)^{-1}$.
We proceed as (3.5) in the proof of Theorem 1.2 in [15]. Then :

\begin{eqnarray*}
& & \log \Det^{\ast} \left( 2(I - \PP)(Q - {\mathcal A})(I - \PP) \right)  \hspace{0.1 cm}  = \hspace{0.1 cm}
\log \Det \left( 2 (I - \PP)(Q - {\mathcal A})(I - \PP) + \pr_{U} \right) \\
& = & \log \ddet_{\Fr} \left( \frac{1}{2}  ( I + K^{-1} U_{\PP}) ( I + U_{\PP}^{-1}K ) + \pr_{L} (I - K)^{-1} (Q - {\mathcal A})^{-1} ( I - K) \pr_{L} \right) \\
& & \hspace{1.0 cm} + \hspace{0.1 cm} \log \Det \left( (I - K)^{-1} (Q - {\mathcal A}) ( I - K) \right)  \\
& = & \log \ddet_{\Fr}^{\ast} \left( I + \frac{1}{2} ( K^{-1} U_{\PP} + U_{\PP}^{-1} K ) \right) +
\log \ddet \left( \pr_{L} (I - K)^{-1} (Q - {\mathcal A})^{-1} ( I - K) \pr_{L} \right) \\
& & \hspace{1.0 cm} + \hspace{0.1 cm} \log \Det \left( (I - {\mathcal C}) (Q - {\mathcal A}) ( I - {\mathcal C}) \right).
\end{eqnarray*}

\noindent
Lemma 3.1 in [15] shows that $\ddet \left( \pr_{L} (I - K)^{-1} (Q - {\mathcal A})^{-1} ( I - K) \pr_{L} \right) = \ddet V_{M, \PP}$,
from which together with Theorem \ref{Theorem:2.3} the result follows.
\end{proof}

\noindent
{\it Remark} : The kernel of $ \left( I + \frac{1}{2} ( K^{-1} U_{\PP} + U_{\PP}^{-1} K ) \right)$ is $L$ and hence we may write

\begin{eqnarray*}
\ddet_{\Fr}^{\ast} \left( I + \frac{1}{2} ( K^{-1} U_{\PP} + U_{\PP}^{-1} K ) \right) & = &
\ddet_{\Fr} \left( I + \frac{1}{2} ( K^{-1} U_{\PP} + U_{\PP}^{-1} K ) \right)|_{L^{2}(E_{Y}^{+})^{\ast}}.
\end{eqnarray*}

We next discuss the gluing formula of the zeta-determinants of Dirac Laplacians.
Let $({\widehat M}, {\widehat g})$ be a closed Riemannian manifold and $Y$ be a hypersurface of ${\widehat M}$ such that
${\widehat M} - Y$ has two components. We denote by $M_{1}$, $M_{2}$ the closure of each component, {\it i.e.} ${\widehat M} = M_{1} \cup_{Y} M_{2}$.
We assume that ${\widehat g}$ is a product metric on
a collar neighborhood $N$ of $Y$ and $N$ is isometric to $(-1, 1) \times Y$.
Let ${\widehat E} \rightarrow {\widehat M}$ be a Hermitian vector bundle having the product structure on $N$
and ${\mathcal D}_{{\widehat M}}$ be a Dirac type operator acting on smooth sections of ${\widehat E}$
which has the form, on $N$, ${\mathcal D}_{{\widehat M}} = G ( \partial_{u} + {\mathcal A})$ and satisfies (\ref{E:2.2}) as before.
Without loss of generality we assume that $\partial_{u}$ points outward on the boundary of $M_{1}$ and points inward on the boundary of $M_{2}$.
We denote by ${\mathcal D}_{M_{1}}$, ${\mathcal D}_{M_{2}}$ the restriction of ${\mathcal D}_{{\widehat M}}$ to $M_{1}$, $M_{2}$
and denote by $\gamma_{0}$ the restriction map to
 $Y$.
Suppose that $\{ h_{1}, \cdots, h_{q} \}$ is an orthonormal basis for
$(\Ker {\mathcal D}^{2}_{{\widehat M}})|_{Y} := \{ \Phi|_{Y} \mid {\mathcal D}^{2}_{{\widehat M}} \Phi = 0 \}$, where $q = \Dim \Ker {\mathcal D}_{{\widehat M}}$.
Then there exist $\Phi_{1}, \cdots, \Phi_{q}$ in $\Ker {\mathcal D}^{2}_{{\widehat M}}$ with $\Phi_{i}|_{Y} = h_{i}$.
We define a positive definite Hermitian matrix $A_{0}$ by

\begin{eqnarray}   \label{E:2.15}
A_{0} = (a_{ij}), \qquad \text{where} \quad a_{ij} = \langle \Phi_{i}, \Phi_{j} \rangle_{{\widehat M}}.
\end{eqnarray}

\noindent
Let ${\mathcal C}_{1}$, ${\mathcal C}_{2}$ be Calder\'on projectors for ${\mathcal D}_{M_{1}}$, ${\mathcal D}_{M_{2}}$ and
$K_{1}$, $K_{2} : C^{\infty}(E_{Y}^{+}) \rightarrow C^{\infty}(E_{Y}^{-})$ be unitary operators
such that $\graph(K_{i}) = \Imm {\mathcal C}_{i}$, $i = 1, 2$.
The following result is due to P. Loya, J. Park ([16], [17]) and the second author ([15]), independently.

\begin{eqnarray}  \label{E:2.16}
& & \log \Det^{\ast} {\mathcal D}_{{\widehat M}}^{2} - \log \Det {\mathcal D}^{2}_{M_{1}, {\mathcal C}_{1}} -
\log \Det {\mathcal D}^{2}_{M_{2}, {\mathcal C}_{2}} \nonumber   \\
&  & \hspace{0.2 cm} = \hspace{0.1 cm} - \log 2 \cdot ( \zeta_{{\mathcal A}^{2}}(0) + l )  \hspace{0.1 cm} + \hspace{0.1 cm} 2 \log \ddet A_{0}
\hspace{0.1 cm} + \hspace{0.1 cm}
2 \log | \ddet^{\ast}_{\Fr} \left( \frac{1}{2} \left( I - K_{1}^{-1} K_{2} \right) \right) |,
\end{eqnarray}

\noindent
where $l = \Dim \Ker {\mathcal A}$.

\vspace{0.2 cm}

\noindent
{\it Remark} : We note that ${\mathcal D}_{{\widehat M}} = G ( \partial_{u} + {\mathcal A}) = - G ( - \partial_{u} - {\mathcal A})$ near $Y$.
We use the form $G ( \partial_{u} + {\mathcal A})$ on $M_{2}$ so that
$K_{2} = U_{\Pi_{>}} + {\frak F}_{2}$ for some smoothing operator ${\frak F}_{2}$ by (\ref{E:2.10}).
Similarly, We use the form $-G ( - \partial_{u} - {\mathcal A})$ on $M_{1}$ so that
$K_{1} = U_{\Pi_{<}} + {\frak F}_{1}$ for some smoothing operator ${\frak F}_{1}$.
Since $U_{\Pi_{<}} = - U_{\Pi_{>}}$, $K_{2} = - K_{1} + {\frak F}$ for some smoothing operator ${\frak F}$ and
hence $\frac{1}{2} \left( I - K_{1}^{-1} K_{2} \right)$ is of the form $I + \alpha$ for some trace class operator $\alpha$.
Moreover, The kernel of $\hspace{0.1 cm} I - K_{1}^{-1} K_{2} \hspace{0.1 cm}$ consists of $x \in L^{2}(E_{Y}^{+})$ such that
$x + K_{1} x \hspace{0.1 cm} ( \hspace{0.1 cm} = x + K_{2} x)$ can be extended to a
harmonic section of ${\widehat {\mathcal D}}$ on ${\widehat M}$.

\vspace{0.2 cm}

\noindent
Theorem \ref{Theorem:2.4} and (\ref{E:2.16}) lead to the following result, which is an analogue of Theorem 1.3 in [15].

\begin{theorem} \label{Theorem:2.5}
Let $\PP_{1}$, $\PP_{2}$ be orthogonal pseudodifferential projections satisfying the {\bf Condition A} with respect to $M_{1}$ and $M_{2}$,
respectively. Suppose that for $i = 1, 2$, $U_{\PP_{i}} : C^{\infty}(E_{Y}^{+}) \rightarrow C^{\infty}(E_{Y}^{-})$ is a unitary operator
such that $\graph(U_{\PP_{i}}) = \Imm \PP_{i}$. We also denote by ${\mathcal A}_{i}$ the tangential Dirac operator of ${\mathcal D}_{M_{i}}$
and by $Q_{i}$ the Neumann jump operator with respect to ${\mathcal D}^{2}_{M_{i}}$ on $M_{i}$.
Then the following equality holds.

\begin{eqnarray*}
&  & \log \Det^{\ast} {\mathcal D}_{{\widehat M}}^{2} - \log \Det^{\ast} {\mathcal D}^{2}_{M_{1}, \PP_{1}} -
\log \Det^{\ast} {\mathcal D}^{2}_{M_{2}, \PP_{2}}  \\
& = & - \log 2 \cdot ( \zeta_{{\mathcal A}^{2}}(0) + l ) \hspace{0.1 cm} + \hspace{0.1 cm} 2 \log \ddet A_{0}
 - 2 \sum_{i=1}^{2}\log \ddet V_{M_{i, \PP_{i}}} + 2 \log | \ddet^{\ast}_{\Fr} \left( \frac{1}{2} \left( I - K_{1}^{-1} K_{2} \right) \right) | \\
& & - \hspace{0.1 cm} \sum_{i=1}^{2} \log \ddet^{\ast}_{\Fr} \left( I + \frac{1}{2} \left( U_{\PP_{i}}^{-1} K_{i} + K_{i}^{-1} U_{\PP_{i}} \right) \right)
+ \log 2 \sum_{i=1}^{2} \zeta_{(I - \PP_{i}) (Q_{i} - {\mathcal A}_{i}) (I - \PP_{i})}(0) .
\end{eqnarray*}
\end{theorem}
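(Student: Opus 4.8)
The plan is to derive the identity purely by combining two results already in hand: the Calder\'on-condition gluing formula (\ref{E:2.16}) on the closed manifold, and Theorem \ref{Theorem:2.4} applied separately to each piece $M_{1}$ and $M_{2}$. The organizing observation is that the left-hand side telescopes through the Calder\'on determinants:
\[
\log \Det^{\ast} {\mathcal D}_{{\widehat M}}^{2} - \sum_{i=1}^{2} \log \Det^{\ast} {\mathcal D}^{2}_{M_i, \PP_i}
= \Big( \log \Det^{\ast} {\mathcal D}_{{\widehat M}}^{2} - \sum_{i=1}^{2} \log \Det {\mathcal D}^{2}_{M_i, {\mathcal C}_i} \Big)
- \sum_{i=1}^{2} \Big( \log \Det^{\ast} {\mathcal D}^{2}_{M_i, \PP_i} - \log \Det {\mathcal D}^{2}_{M_i, {\mathcal C}_i} \Big),
\]
so that no new spectral analysis is needed beyond these two inputs.

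First I would substitute (\ref{E:2.16}) for the first grouped term, which contributes exactly $-\log 2 \cdot (\zeta_{{\mathcal A}^{2}}(0) + l)$, $2 \log \ddet A_{0}$, and $2 \log | \ddet^{\ast}_{\Fr} ( \frac{1}{2} ( I - K_{1}^{-1} K_{2} ) ) |$. Next, writing Theorem \ref{Theorem:2.4} in logarithmic form for each $i$ gives
\[
\log \Det^{\ast} {\mathcal D}^{2}_{M_i, \PP_i} - \log \Det {\mathcal D}^{2}_{M_i, {\mathcal C}_i}
= 2 \log \ddet V_{M_i, \PP_i} + \log \ddet^{\ast}_{\Fr} \Big( I + \frac{1}{2} ( U_{\PP_i}^{-1} K_i + K_i^{-1} U_{\PP_i} ) \Big) - \log 2 \cdot \zeta_{(I - \PP_i)(Q_i - {\mathcal A}_i)(I - \PP_i)}(0).
\]
Subtracting the sum over $i = 1, 2$ and collecting terms reproduces the claimed equality term by term: the $-2 \sum_i \log \ddet V_{M_i, \PP_i}$ and the $-\sum_i \log \ddet^{\ast}_{\Fr}(\cdots)$ contributions appear with a flipped sign from the second group, while the two $-\log 2 \cdot \zeta_{\cdots}(0)$ terms become $+\log 2 \sum_i \zeta_{\cdots}(0)$.

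The one place where I expect to need care rather than mere bookkeeping is the compatibility of the Calder\'on data $K_i = U_{{\mathcal C}_i}$ between the two inputs. Both (\ref{E:2.16}) and Theorem \ref{Theorem:2.4} must use the same unitary $K_i$ with a fixed choice of normal derivative on each piece; as recorded in the Remark after (\ref{E:2.16}), one writes ${\mathcal D}_{{\widehat M}} = G ( \partial_{u} + {\mathcal A})$ on $M_{2}$ and $-G ( - \partial_{u} - {\mathcal A})$ on $M_{1}$, giving $K_{2} = U_{\Pi_{>}} + {\frak F}_{2}$ and $K_{1} = U_{\Pi_{<}} + {\frak F}_{1}$ for smoothing operators ${\frak F}_{i}$. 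I would verify that the unitaries $U_{\PP_i}$ entering Theorem \ref{Theorem:2.4} are taken relative to these same orientations, so that each factor $\ddet^{\ast}_{\Fr}( I + \frac{1}{2}(U_{\PP_i}^{-1} K_i + K_i^{-1} U_{\PP_i}))$ is precisely the one produced by that theorem without an extra sign. Once this convention is fixed, the argument is nothing more than an addition of logarithms and the result follows.
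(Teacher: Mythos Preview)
Your proposal is correct and follows exactly the route the paper takes: the paper simply states that Theorem \ref{Theorem:2.4} together with (\ref{E:2.16}) yield Theorem \ref{Theorem:2.5}, and your telescoping through the Calder\'on determinants with the subsequent term-by-term bookkeeping is precisely that derivation spelled out. Your care about the orientation conventions for $K_{1}$, $K_{2}$ is appropriate and matches the Remark following (\ref{E:2.16}).
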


\vspace{0.2 cm}

In the next two sections we are going to apply Theorem \ref{Theorem:2.5} to some boundary conditions satisfying the {\bf Condition A}.

\vspace{0.3 cm}

\section{Gluing formula of Dirac Laplacians with respect to $\PP_{-, {\mathcal L}_{0}}$ and $\PP_{+, {\mathcal L}_{1}}$}

\vspace{0.2 cm}

Let $(M, g)$ be an $m$-dimensional compact oriented Riemannian manifold with boundary $Y$ and $E \rightarrow M$ be a Hermitian flat vector bundle
with  a flat connection $\nabla$ which is compatible to the Hermitian structure on $E$.
We extend $\nabla$ to the de Rham operator acting on $E$-valued differential forms $\Omega^{\ast}(M, E)$, which we denote by $\nabla$ again.
We assume that near $Y$ $g$ is a product metric and $E$ has a product structure.
Using the Hodge star operator $\ast_{M}$, we define an involution $\Gamma : \Omega^{q}(M, E) \rightarrow \Omega^{m-q}(M, E)$ by

\begin{eqnarray}  \label{E:3.1}
\Gamma \omega & := &  i^{[\frac{m+1}{2}]} (-1)^{\frac{q(q+1)}{2}} \ast_{M} \omega \qquad \omega \in \Omega^{q}(M, E),
\end{eqnarray}

\vspace{0.2 cm}

\noindent
where $[\frac{m+1}{2}] = \frac{m}{2}$ for $m$ even and $\frac{m+1}{2}$ for $m$ odd.
Then $\Gamma^{2} = \Id$.
The odd signature operator $\B$ acting on $\Omega^{\bullet}(M, E)$ is defined by

\begin{eqnarray}  \label{E:3.2}
\B = \nabla \Gamma + \Gamma \nabla & : & \Omega^{\bullet}(M, E) \rightarrow \Omega^{\bullet}(M, E).
\end{eqnarray}

\noindent
Let $u$ be the normal coordinate to $Y$. A differential form $\omega$ is expressed near $Y$ by
$\omega = \omega_{\Tan} + du \wedge \omega_{\Nor}$,
where $\omega_{\Tan}$ and $\omega_{\Nor}$ are called the tangential and normal parts of $\omega$, respectively.
Using the product structure we can
induce a flat connection $\nabla^{Y} : \Omega^{\bullet}(Y, E|_{Y}) \rightarrow \Omega^{\bullet + 1}(Y, E|_{Y})$ from $\nabla$ and a Hodge
star operator $\ast_{Y} : \Omega^{\bullet}(Y, E|_{Y}) \rightarrow
\Omega^{m-1-\bullet}(Y, E|_{Y})$ from $\ast_{M}$.
We define two involutions $\beta$ and $\Gamma^{Y}$ by

\begin{eqnarray}   \label{E:3.3}
\beta & : & \Omega^{q}(Y, E|_{Y}) \rightarrow \Omega^{q}(Y, E|_{Y}), \qquad \beta(\omega) = (-1)^{q} \omega  \nonumber \\
\Gamma^{Y} & : & \Omega^{q}(Y, E|_{Y}) \rightarrow \Omega^{m-1-q}(Y, E|_{Y}),
\qquad \Gamma^{Y} (\omega) = i^{[\frac{m}{2}]} (-1)^{\frac{q(q+1)}{2}} \ast_{Y} \omega.
\end{eqnarray}

\noindent
Then $\beta^{2} = (\Gamma^{Y})^{2} = \Id$.
If we write
$\phi_{\Tan} + du \wedge \phi_{\Nor}$ by
$\left( \begin{array}{clcr} \phi_{\Tan} \\ \phi_{\Nor} \end{array} \right)$ near the boundary $Y$,
$\hspace{0.2 cm} \B$ is written by

\begin{eqnarray}  \label{E:3.4}
\B & = & \frac{1}{\sqrt{(-1)^{m}}} \hspace{0.1 cm} \beta \hspace{0.1 cm} \Gamma^{Y} \left( \begin{array}{clcr} 1 & 0 \\ 0 & 1 \end{array} \right) \left\{ \partial_{u}
\left( \begin{array}{clcr} 1 & 0 \\ 0 & 1 \end{array} \right) + (- \nabla^{Y} - \Gamma^{Y} \nabla^{Y} \Gamma^{Y})
\left( \begin{array}{clcr} 0 & 1 \\ 1 & 0 \end{array} \right) \right\}.
\end{eqnarray}

\noindent
Comparing (\ref{E:3.4}) with (\ref{E:2.1}), we have

\begin{eqnarray}  \label{E:3.5}
G = \frac{1}{\sqrt{(-1)^{m}}} \hspace{0.1 cm} \beta \hspace{0.1 cm} \Gamma^{Y} \left( \begin{array}{clcr} 1 & 0 \\ 0 & 1 \end{array} \right), \qquad
{\mathcal A} = - ( \nabla^{Y} + \Gamma^{Y} \nabla^{Y} \Gamma^{Y}) \left( \begin{array}{clcr} 0 & 1 \\ 1 & 0 \end{array} \right),
\end{eqnarray}

\noindent
which satisfy the relations (\ref{E:2.2}).

\vspace{0.2 cm}

We next describe the boundary conditions $\PP_{-, {\mathcal L}_{0}}$ and $\PP_{+, {\mathcal L}_{1}}$.
We put $\B_{Y} := \Gamma^{Y} \nabla^{Y} + \nabla^{Y} \Gamma^{Y}$.
Then ${\mathcal H}^{\bullet}(Y, E|_{Y}) := \Ker \B^{2}_{Y}$ is a finite dimensional vector space and we can decompose

\begin{eqnarray*}
\Omega^{\bullet}(Y, E|_{Y}) & = & \Imm \nabla^{Y} \oplus \Imm \Gamma^{Y} \nabla^{Y} \Gamma^{Y} \oplus {\mathcal H}^{\bullet}(Y, E|_{Y}).
\end{eqnarray*}

\noindent
If $\nabla \phi = \Gamma \nabla \Gamma \phi = 0$ for $\phi \in \Omega^{\bullet}(M, E)$, simple computation shows that $\phi$ is expressed on
$Y$ by

\begin{equation} \label{E:3.6}
\phi|_{Y} = \nabla^{Y} \varphi_{1} + \varphi_{2} + du \wedge ( \Gamma^{Y} \nabla^{Y} \Gamma^{Y} \psi_{1} + \psi_{2}),
\quad \varphi_{1}, \hspace{0.1 cm} \psi_{1} \in \Omega^{\bullet}(Y, E|_{Y}),
\quad \varphi_{2}, \hspace{0.1 cm} \psi_{2} \in {\mathcal H}^{\bullet}(Y, E|_{Y}).
\end{equation}

\noindent
Here $\varphi_{2}$ and $\psi_{2}$ are harmonic parts of $\hspace{0.1 cm} \iota^{\ast} \phi \hspace{0.1 cm}$ and
$\hspace{0.1 cm} \ast_{Y} \iota^{\ast} ( \ast_{M} \phi ) \hspace{0.1 cm}$ up to sign,
where $ \iota : Y \rightarrow M$ is the natural inclusion.
We define ${\mathcal K}$ by

\begin{equation} \label{E:3.7}
{\mathcal K} := \{ \varphi_{2} \in {\mathcal H}^{\bullet}(Y, E|_{Y}) \mid \nabla \phi = \Gamma \nabla \Gamma \phi = 0 \},
\end{equation}

\noindent
where $\phi$ has the form (\ref{E:3.6}).
If $\phi$ satisfies $\nabla \phi = \Gamma \nabla \Gamma \phi = 0$, so is $\Gamma \phi$ and hence

\begin{equation} \label{E:3.8}
\Gamma^{Y} {\mathcal K} = \{ \psi_{2} \in {\mathcal H}^{\bullet}(Y, E|_{Y}) \mid \nabla \phi = \Gamma \nabla \Gamma \phi = 0 \},
\end{equation}

\noindent
where $\phi$ has the form (\ref{E:3.6}).
Green formula (Corollary 2.3 in [9]) shows that
${\mathcal K}$ is perpendicular to $\Gamma^{Y} {\mathcal K}$.
We then have the following decomposition (cf. Corollary 8.4 in [13], Lemma 2.4 in [9]).

\begin{equation}  \label{E:3.9}
{\mathcal K} \oplus \Gamma^{Y} {\mathcal K} = {\mathcal H}^{\bullet}(Y, E|_{Y}),
\end{equation}

\noindent
which shows that
$( {\mathcal H}^{\bullet}(Y, E|_{Y}), \hspace{0.1 cm} \langle \hspace{0.1 cm}, \hspace{0.1 cm} \rangle_{Y}, \hspace{0.1 cm} \frac{1}{\sqrt{(-1)^{m}}} \beta \Gamma^{Y} )$
is a symplectic vector space with Lagrangian subspaces ${\mathcal K}$ and $\Gamma^{Y} {\mathcal K}$.
We denote by

\begin{equation}   \label{E:3.10}
{\mathcal L}_{0} = \left( \begin{array}{clcr} {\mathcal K} \\ {\mathcal K} \end{array} \right), \qquad
{\mathcal L}_{1} = \left( \begin{array}{clcr} \Gamma^{Y} {\mathcal K} \\ \Gamma^{Y} {\mathcal K} \end{array} \right).
\end{equation}

\vspace{0.2 cm}

We next define the orthogonal projections
${\mathcal P}_{-, {\mathcal L}_{0}}$, ${\mathcal P}_{+, {\mathcal L}_{1}} :
\left( \begin{array}{clcr} \Omega^{\bullet}(Y, E|_{Y})  \\ \oplus  \\ \Omega^{\bullet}(Y, E|_{Y}) \end{array} \right) \rightarrow
\left( \begin{array}{clcr} \Omega^{\bullet}(Y, E|_{Y}) \\  \oplus   \\ \Omega^{\bullet}(Y, E|_{Y}) \end{array} \right)$ by

\begin{eqnarray}    \label{E:3.11}
\Imm \PP_{-, {\mathcal L}_{0}}  =  \left( \begin{array}{clcr} \oplus_{q=0}^{m-1} \Omega^{q, -}(Y, E|_{Y}) \\
\oplus_{q=0}^{m-1} \Omega^{q, -}(Y, E|_{Y}) \end{array} \right) \oplus {\mathcal L}_{0},     \qquad
\Imm \PP_{+, {\mathcal L}_{1}} =  \left( \begin{array}{clcr} \oplus_{q=0}^{m-1} \Omega^{q, +}(Y, E|_{Y}) \\
\oplus_{q=0}^{m-1} \Omega^{q, +}(Y, E|_{Y}) \end{array} \right) \oplus {\mathcal L}_{1},
\end{eqnarray}

\vspace{0.2 cm}

\noindent
where $\hspace{0.1 cm} \Omega^{\bullet, -}(Y, E|_{Y}) := \Imm \nabla^{Y}$ and $\Omega^{\bullet, +}(Y, E|_{Y}) := \Imm \Gamma^{Y} \nabla^{Y} \Gamma^{Y}$.
Then ${\mathcal P}_{-, {\mathcal L}_{0}}$ and ${\mathcal P}_{+, {\mathcal L}_{1}}$ are pseudodifferential operators and give well-posed
boundary conditions for $\B$ and the refined analytic torsion (Lemma 2.15 in [9]).
The authors discussed the boundary problem of the refined analytic torsion on compact manifolds with boundary with these boundary conditions
in [9], [10], [11].
We denote by $\B_{{\mathcal P}_{-, {\mathcal L}_{0}}}$ and $\B^{2}_{q, {\mathcal P}_{-, {\mathcal L}_{0}}}$ the realizations of $\B$ and $\B^{2}_{q}$
with respect to ${\mathcal P}_{-, {\mathcal L}_{0}}$, {\it i.e.}

\begin{eqnarray}\label{E:3.12}
\Dom \left( \B_{{\mathcal P}_{-, {\mathcal L}_{0}}} \right) & = &
\left\{ \psi \in \Omega^{\bullet}(M, E) \mid {\mathcal P}_{-, {\mathcal L}_{0}} \left( \psi|_{Y} \right) = 0 \right\},  \nonumber\\
\Dom \left( \B^{2}_{q, {\mathcal P}_{-, {\mathcal L}_{0}}} \right) & = &
\left\{ \psi \in \Omega^{q}(M, E) \mid {\mathcal P}_{-, {\mathcal L}_{0}} \left( \psi|_{Y} \right) = 0, \hspace{0.1 cm}
{\mathcal P}_{-, {\mathcal L}_{0}} \left( (\B \psi)|_{Y} \right) = 0 \right\}.
\end{eqnarray}

\noindent
We define $\B_{{\mathcal P}_{+, {\mathcal L}_{1}}}$, $\B^{2}_{q, {\mathcal P}_{+, {\mathcal L}_{1}}}$ in the same way.
For $\psi = \psi_{\tan} + du \wedge \psi_{\Nor} \in \Omega^{q}(M, E)$, we define $\B^{2}_{q, \rel}$ and $\B^{2}_{q, \Abs}$ by

\begin{eqnarray}  \label{E:3.122}
\Dom \left( \B^{2}_{q, \rel} \right) & = &
\left\{ \psi \in \Omega^{q}(M, E) \mid \psi_{\tan}|_{Y} = 0, \hspace{0.1 cm} \left( \partial_{u} \psi_{\Nor} \right)|_{Y} = 0 \right\},  \nonumber\\
\Dom \left( \B^{2}_{q, \Abs} \right) & = &
\left\{ \psi \in \Omega^{q}(M, E) \mid \left( \partial_{u} \psi_{\tan} \right)|_{Y} = 0 , \hspace{0.1 cm} \psi_{\Nor}|_{Y} = 0 \right\}.
\end{eqnarray}

\noindent
The following result is straightforward (Lemma 2.11 in [9]).

\vspace{0.2 cm}

\begin{lemma}  \label{Lemma:3.1}
$$
\Ker \B^{2}_{q, {\mathcal P}_{-, {\mathcal L}_{0}}} = \ker \B^{2}_{q, \rel} = H^{q}(M, Y ; E), \qquad
\Ker \B^{2}_{q, {\mathcal P}_{+, {\mathcal L}_{1}}} = \ker \B^{2}_{q, \Abs} = H^{q}(M ; E).
$$
\end{lemma}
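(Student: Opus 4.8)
The plan is to prove both chains of equalities by showing that each of the four kernels coincides with a space of \emph{harmonic fields} (sections annihilated simultaneously by $\nabla$ and $\Gamma\nabla\Gamma$) subject to a single first--order boundary condition, and then to identify these harmonic fields with the asserted cohomology groups. I will carry out the relative chain $\Ker\B^{2}_{q,\PP_{-,\mathcal{L}_{0}}}=\ker\B^{2}_{q,\rel}=H^{q}(M,Y;E)$ in detail; the absolute chain follows by applying the involution $\Gamma$, which interchanges $\nabla^{Y}$ with $\Gamma^{Y}\nabla^{Y}\Gamma^{Y}$ and $\mathcal{K}$ with $\Gamma^{Y}\mathcal{K}$, hence $\PP_{-,\mathcal{L}_{0}}$ with $\PP_{+,\mathcal{L}_{1}}$ and $\rel$ with $\Abs$.

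First I would show that any $\psi$ lying in either kernel is a harmonic field. Starting from $\B^{2}\psi=0$ and applying the Green formula for $\B$ (Corollary 2.3 in [9]) gives $\|\B\psi\|^{2}_{M}$ plus a boundary pairing of $G\,(\B\psi)|_{Y}$ against $\psi|_{Y}$. For the $\PP_{-,\mathcal{L}_{0}}$--realization the two domain conditions place both $\psi|_{Y}$ and $(\B\psi)|_{Y}$ in $\ker\PP_{-,\mathcal{L}_{0}}$, which is isotropic for the bilinear form $\langle G\,\cdot\,,\cdot\rangle_{Y}$; this is exactly the statement that $\PP_{-,\mathcal{L}_{0}}$ defines a symmetric realization of $\B$ from the well-posedness analysis of [9]. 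For $\B^{2}_{q,\rel}$ the conditions $\psi_{\tan}|_{Y}=0$ and $(\partial_{u}\psi_{\Nor})|_{Y}=0$ kill the same boundary term directly. In either case $\B\psi=0$, and since $\nabla\Gamma\psi$ and $\Gamma\nabla\psi$ sit in degrees $m-q+1$ and $m-q-1$, this forces $\nabla\psi=0$ and $\Gamma\nabla\Gamma\psi=0$ separately.

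The crux is then to match the two first--order conditions on harmonic fields. A harmonic field satisfies $\nabla\psi=\Gamma\nabla\Gamma\psi=0$, so its restriction has the normal form (\ref{E:3.6}); consequently the tangential datum $\psi_{\tan}|_{Y}=\nabla^{Y}\varphi_{1}+\varphi_{2}$ lies in $\Imm\nabla^{Y}\oplus\mathcal{K}$ while the normal datum $\psi_{\Nor}|_{Y}=\Gamma^{Y}\nabla^{Y}\Gamma^{Y}\psi_{1}+\psi_{2}$ lies in $\Imm\Gamma^{Y}\nabla^{Y}\Gamma^{Y}\oplus\Gamma^{Y}\mathcal{K}$, by (\ref{E:3.7})--(\ref{E:3.8}). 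Since $\Imm\PP_{-,\mathcal{L}_{0}}$ equals $\Imm\nabla^{Y}\oplus\mathcal{K}$ in each slot and is orthogonal to $\Imm\Gamma^{Y}\nabla^{Y}\Gamma^{Y}\oplus\Gamma^{Y}\mathcal{K}$ by the Hodge decomposition and (\ref{E:3.9}), the projection annihilates the normal datum automatically, and $\PP_{-,\mathcal{L}_{0}}(\psi|_{Y})=0$ collapses to $\nabla^{Y}\varphi_{1}=0$, $\varphi_{2}=0$, that is, to $\psi_{\tan}|_{Y}=0$. To see that the resulting field also lies in $\Dom(\B^{2}_{q,\rel})$ I would use $\partial_{u}\psi=-\mathcal{A}\psi$ near $Y$ together with (\ref{E:3.5}): the normal component of $\mathcal{A}\psi$ is $(\nabla^{Y}+\Gamma^{Y}\nabla^{Y}\Gamma^{Y})\psi_{\tan}$, which vanishes once $\psi_{\tan}|_{Y}=0$, so $(\partial_{u}\psi_{\Nor})|_{Y}=0$. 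Hence both realizations have kernel exactly the space of relative harmonic fields.

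The remaining identification $\ker\B^{2}_{q,\rel}=H^{q}(M,Y;E)$ (and dually $\ker\B^{2}_{q,\Abs}=H^{q}(M;E)$) is the classical Hodge--Morrey--Friedrichs theorem for the flat bundle $E$ on the manifold with boundary $M$. I expect the main obstacle to be the middle step: verifying that on harmonic fields the pseudodifferential projection $\PP_{-,\mathcal{L}_{0}}$ reduces to the tangential Dirichlet condition. This rests entirely on the orthogonal Lagrangian splitting $\mathcal{H}^{\bullet}(Y,E|_{Y})=\mathcal{K}\oplus\Gamma^{Y}\mathcal{K}$ of (\ref{E:3.9}) and on the fact (\ref{E:3.6})--(\ref{E:3.8}) that the tangential and normal boundary data of a field in $\ker\B$ lie in complementary summands; once these are granted, the reduction is purely algebraic.
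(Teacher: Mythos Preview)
Your argument is correct. The paper does not actually prove this lemma; it simply cites Lemma~2.11 of [9] and calls the result ``straightforward.'' What you have written is a self-contained proof along exactly the lines one expects from [9]: use the Green formula together with the Lagrangian property of $\ker\PP_{-,\mathcal{L}_{0}}$ (respectively the explicit rel conditions) to reduce $\B^{2}\psi=0$ to $\B\psi=0$, split by degree to get $\nabla\psi=\Gamma\nabla\Gamma\psi=0$, invoke the normal form (\ref{E:3.6})--(\ref{E:3.8}) to see that on harmonic fields the projection condition $\PP_{-,\mathcal{L}_{0}}(\psi|_{Y})=0$ collapses to $\psi_{\tan}|_{Y}=0$, and recover $(\partial_{u}\psi_{\Nor})|_{Y}=0$ from $\partial_{u}\psi=-\mathcal{A}\psi$ at $Y$. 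The final identification with $H^{q}(M,Y;E)$ is indeed classical Hodge--Morrey--Friedrichs.

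Two minor remarks. First, your sign in ``the normal component of $\mathcal{A}\psi$ is $(\nabla^{Y}+\Gamma^{Y}\nabla^{Y}\Gamma^{Y})\psi_{\tan}$'' is off by $-1$ from (\ref{E:3.5}), but this is irrelevant since you are setting $\psi_{\tan}|_{Y}=0$. Second, for the reverse inclusion $\Ker\B^{2}_{q,\rel}\subset\Ker\B^{2}_{q,\PP_{-,\mathcal{L}_{0}}}$ you should state explicitly that the second domain condition $\PP_{-,\mathcal{L}_{0}}((\B\psi)|_{Y})=0$ is automatic once $\B\psi=0$; you mention this only implicitly. With these cosmetic points noted, the proposal matches what the paper defers to [9].
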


\vspace{0.2 cm}

We denote by $\left( \Omega^{\bullet}(M, E)|_{Y} \right)^{\ast}$ the orthogonal complement of
$\left( \begin{array}{clcr} {\mathcal H}^{\bullet}(Y, E|_{Y}) \\ {\mathcal H}^{\bullet}(Y, E|_{Y}) \end{array} \right)$ in
$\left( \Omega^{\bullet}(M, E)|_{Y} \right)$.
Then the action of the unitary operator $G$ splits according to the following decomposition.

\begin{eqnarray}  \label{E:3.13}
G : \left( \Omega^{\bullet}(M, E)|_{Y} \right)^{\ast} \oplus
\left( \begin{array}{clcr} {\mathcal H}^{\bullet}(Y, E|_{Y}) \\ {\mathcal H}^{\bullet}(Y, E|_{Y}) \end{array} \right)
\hspace{0.1 cm} \rightarrow \hspace{0.1 cm} \left( \Omega^{\bullet}(M, E)|_{Y} \right)^{\ast} \oplus
\left( \begin{array}{clcr} {\mathcal H}^{\bullet}(Y, E|_{Y}) \\ {\mathcal H}^{\bullet}(Y, E|_{Y}) \end{array} \right)
\end{eqnarray}

\vspace{0.2 cm}

\noindent
We define unitary maps $\hspace{0.1 cm} U_{\PP_{-}}$, $\hspace{0.1 cm} U_{\Pi_{>}} :
(\Omega^{\bullet}(M, E)|_{Y})^{\ast} \rightarrow (\Omega^{\bullet}(M, E)|_{Y})^{\ast} \hspace{0.1 cm}$ by

\begin{eqnarray}   \label{E:3.15}
U_{\PP_{-}} = (\B_{Y}^{2})^{- 1} \left( (\B_{Y}^{2})^{-} - ( \B_{Y}^{2})^{+} \right) \left( \begin{array}{clcr} 1 & 0 \\ 0 & 1 \end{array} \right), \quad
U_{\Pi_{>}} = (\B_{Y}^{2})^{- \frac{1}{2}} \left( \nabla^{Y} + \Gamma^{Y} \nabla^{Y} \Gamma^{Y} \right) \left( \begin{array}{clcr} 0 & - 1 \\ - 1 & 0 \end{array} \right),
\end{eqnarray}

\vspace{0.2 cm}

\noindent
where $(\B_{Y}^{2})^{-} := \nabla^{Y} \Gamma^{Y} \nabla^{Y} \Gamma^{Y}$, $(\B_{Y}^{2})^{+} := \Gamma^{Y} \nabla^{Y} \Gamma^{Y} \nabla^{Y}$
and $\B_{Y}^{2}$ is understood to be defined on $(\Omega^{\bullet}(M, E)|_{Y})^{\ast}$.
We denote the $\pm i$-eigenspace of $G$ in $(\Omega^{\bullet}(M, E)|_{Y} )^{\ast}$,
$\left( \begin{array}{clcr} {\mathcal H}^{\bullet}(Y, E|_{Y}) \\ {\mathcal H}^{\bullet}(Y, E|_{Y}) \end{array} \right)$ and
$\Omega^{\bullet}(M, E)|_{Y}$ by

\begin{eqnarray}    \label{E:3.14}
(\Omega^{\bullet}(M, E)|_{Y})^{\ast}_{\pm i} & :=  &
\frac{1}{2} (I \mp i G) \hspace{0.1 cm} (\Omega^{\bullet}(M, E)|_{Y})^{\ast}, \qquad
(\Ker {\mathcal A})_{\pm i} \hspace{0.1 cm} := \hspace{0.1 cm} \frac{1}{2} (I \mp i G) \hspace{0.1 cm}
\left( \begin{array}{clcr} {\mathcal H}^{\bullet}(Y, E|_{Y}) \\ {\mathcal H}^{\bullet}(Y, E|_{Y}) \end{array} \right),  \nonumber  \\
(\Omega^{\bullet}(M, E)|_{Y})_{\pm i} & :=  & (\Omega^{\bullet}(M, E))^{\ast}_{\pm i} \oplus (\Ker {\mathcal A})_{\pm i}.
\end{eqnarray}

\noindent
The following lemma is straightforward (cf. (3.2) - (3.5) and Lemma 3.1 in [10]).

\vspace{0.2 cm}

\begin{lemma} \label{Lemma:3.2}
(1) $\hspace{0.1 cm} U_{\PP_{-}}$ and $U_{\Pi_{>}}$ map $(\Omega^{\bullet}(M, E)|_{Y})^{\ast}_{\pm i}$ onto
$(\Omega^{\bullet}(M, E)|_{Y})^{\ast}_{\mp i}$. \newline
(2) $\hspace{0.1 cm} U_{\PP_{-}} U_{\PP_{-}} = U_{\Pi_{>}} U_{\Pi_{>}} = \Id$.
Hence, $U_{\PP_{-}}^{\ast} = U_{\PP_{-}}$ and $U_{\Pi_{>}}^{\ast} =  U_{\Pi_{>}}$.   \newline
(3) $\hspace{0.1 cm} U_{\Pi_{>}}^{\ast} U_{\PP_{-}} + U_{\PP_{-}}^{\ast} U_{\Pi_{>}} = 0$ . \newline
(4) $\hspace{0.1 cm} \Imm \PP_{-} = \{ \omega + U_{\PP_{-}} \omega \mid \omega \in (\Omega^{\bullet}(M, E)|_{Y})^{\ast}_{+i} \}$,
$\hspace{0.5 cm}\Imm \Pi_{>} = \{ \omega + U_{\Pi_{>}} \omega \mid \omega \in (\Omega^{\bullet}(M, E)|_{Y})^{\ast}_{+i} \}$.
\end{lemma}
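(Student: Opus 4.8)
The plan is to reduce all four items to the elementary properties of two auxiliary self-adjoint involutions on the space $W := (\Omega^{\bullet}(M,E)|_{Y})^{\ast}$, the orthogonal complement of the harmonic part, on which $\B_{Y}^{2}$ is invertible and positive. Writing $D := \nabla^{Y} + \Gamma^{Y}\nabla^{Y}\Gamma^{Y}$ and using $(\nabla^{Y})^{2}=0$, $(\Gamma^{Y})^{2}=\Id$ together with the decomposition $\Omega^{\bullet}(Y,E|_{Y}) = \Imm\nabla^{Y} \oplus \Imm\Gamma^{Y}\nabla^{Y}\Gamma^{Y} \oplus {\mathcal H}^{\bullet}(Y,E|_{Y})$, one checks that $(\B_{Y}^{2})^{-}=\nabla^{Y}\Gamma^{Y}\nabla^{Y}\Gamma^{Y}$ acts as $\B_{Y}^{2}$ on $\Imm\nabla^{Y}$ and annihilates $\Imm\Gamma^{Y}\nabla^{Y}\Gamma^{Y}$, and symmetrically for $(\B_{Y}^{2})^{+}$; moreover $D^{2}=\B_{Y}^{2}$ and $D$ commutes with $\B_{Y}^{2}$. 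Hence $T := (\B_{Y}^{2})^{-1}\bigl((\B_{Y}^{2})^{-}-(\B_{Y}^{2})^{+}\bigr)$ is the self-adjoint involution equal to $+\Id$ on $\Imm\nabla^{Y}$ and $-\Id$ on $\Imm\Gamma^{Y}\nabla^{Y}\Gamma^{Y}$, while $S := (\B_{Y}^{2})^{-1/2}D = \Sign(D)$ is a self-adjoint involution. In the tangential/normal block notation of (\ref{E:3.15}) one then has that $U_{\PP_{-}}$ acts diagonally as $T$ on each component, and $U_{\Pi_{>}}$ sends $(\omega_{1},\omega_{2})\mapsto(-S\omega_{2},-S\omega_{1})$.

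Given this, item (2) is immediate: $U_{\PP_{-}}$ and $U_{\Pi_{>}}$ are built from the commuting/anticommuting self-adjoint involutions $T$ and $S$ together with the off-diagonal swap, so a direct block multiplication gives $U_{\PP_{-}}^{2}=U_{\Pi_{>}}^{2}=\Id$, and self-adjointness of $T$, $S$ gives $U_{\PP_{-}}^{\ast}=U_{\PP_{-}}$, $U_{\Pi_{>}}^{\ast}=U_{\Pi_{>}}$. For item (1), since $\tfrac12(I\mp iG)$ is the orthogonal projection onto the $\pm i$-eigenspace of $G$, it suffices to prove that $U_{\PP_{-}}$ and $U_{\Pi_{>}}$ anticommute with $G$. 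Writing $G = \frac{1}{\sqrt{(-1)^{m}}}\,\beta\Gamma^{Y}$ diagonally, the scalar is irrelevant and the claim reduces to showing that $\beta\Gamma^{Y}$ anticommutes with $T$ and with $S$. For $T$: $\Gamma^{Y}$ interchanges $\Imm\nabla^{Y}$ and $\Imm\Gamma^{Y}\nabla^{Y}\Gamma^{Y}$ (because $(\Gamma^{Y})^{2}=\Id$), while $\beta$, being diagonal in form-degree, preserves each, so $\beta\Gamma^{Y}$ swaps the $\pm1$-eigenspaces of $T$ and hence anticommutes with $T$. For $S$: a direct computation gives $\Gamma^{Y}D=D\Gamma^{Y}$, whereas $\beta\nabla^{Y}=-\nabla^{Y}\beta$ and $\beta\Gamma^{Y}\nabla^{Y}\Gamma^{Y}=-\Gamma^{Y}\nabla^{Y}\Gamma^{Y}\beta$ (degree count), so $\beta D=-D\beta$; therefore $\beta\Gamma^{Y}$ anticommutes with $D$, hence with $S=\Sign(D)$.

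Item (3) follows from the anticommutation $TS+ST=0$, after which the self-adjointness from (2) turns $U_{\Pi_{>}}^{\ast}U_{\PP_{-}}+U_{\PP_{-}}^{\ast}U_{\Pi_{>}}$ into the corresponding block expression, which vanishes. To get $TS=-ST$, note that $D$ interchanges $\Imm\nabla^{Y}$ and $\Imm\Gamma^{Y}\nabla^{Y}\Gamma^{Y}$ (again from $(\nabla^{Y})^{2}=0$), so $D$ anticommutes with $T$, while $(\B_{Y}^{2})^{-1/2}$ commutes with $T$ because $\B_{Y}^{2}=D^{2}$ preserves each summand; combining gives $S=(\B_{Y}^{2})^{-1/2}D$ anticommuting with $T$. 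Finally, for item (4) I would compute directly: for $\omega=(\omega_{1},\omega_{2})\in W_{+i}$ one has $\omega+U_{\PP_{-}}\omega=\bigl((I+T)\omega_{1},(I+T)\omega_{2}\bigr)$, and $I+T$ is twice the orthogonal projection onto $\Imm\nabla^{Y}$, so the graph lands in the $\begin{pmatrix}\Imm\nabla^{Y}\\ \Imm\nabla^{Y}\end{pmatrix}$, which is exactly $\Imm\PP_{-}$ on $W$; bijectivity of $U_{\PP_{-}}:W_{+i}\to W_{-i}$ from item (1) upgrades this inclusion to equality, and the same computation with the swap $S$ identifies $\Imm\Pi_{>}$ with the graph of $U_{\Pi_{>}}$. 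The only genuine obstacle is the bookkeeping: one must verify the commutation relations among $\nabla^{Y}$, $\Gamma^{Y}$, $\beta$ and fix the normalizing constant $\frac{1}{\sqrt{(-1)^{m}}}$ so that $G^{2}=-\Id$ and the $\pm i$-conventions are consistent, and one must invoke invertibility of $\B_{Y}^{2}$ on $W$ to ensure that $S$ and $T$ are genuine involutions; all remaining steps are routine block algebra.
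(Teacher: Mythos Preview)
Your argument is correct. The paper itself does not prove this lemma; it simply labels it ``straightforward'' and refers the reader to (3.2)--(3.5) and Lemma~3.1 of \cite{HL2}. Your approach---packaging $U_{\PP_{-}}$ and $U_{\Pi_{>}}$ in terms of the two self-adjoint involutions $T$ (the sign of the Hodge splitting $\Imm\nabla^{Y}\oplus\Imm\Gamma^{Y}\nabla^{Y}\Gamma^{Y}$) and $S=(\B_{Y}^{2})^{-1/2}D$---is exactly the natural one and makes the anticommutation relations transparent.

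Two minor points worth tightening. First, in item~(4) your phrase ``bijectivity \dots\ upgrades this inclusion to equality'' is a little quick. The clean way to close the argument is to note that $U_{\PP_{-}}$ is a self-adjoint involution on $W$ whose $+1$-eigenspace is exactly $\Imm\PP_{-}|_{W}$ (since $T=+1$ on $\Imm\nabla^{Y}$); the image of $\tfrac12(I+U_{\PP_{-}})$ is therefore $\Imm\PP_{-}|_{W}$, and because $U_{\PP_{-}}$ swaps $W_{+i}$ and $W_{-i}$ this image is parametrized by $\omega\mapsto\omega+U_{\PP_{-}}\omega$ for $\omega\in W_{+i}$. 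The $\Pi_{>}$ case is even cleaner once you observe $U_{\Pi_{>}}=(\B_{Y}^{2})^{-1/2}\,{\mathcal A}=\operatorname{sign}({\mathcal A})$ on $W$, so $\Imm\Pi_{>}$ is precisely its $+1$-eigenspace. Second, self-adjointness of $S$ (which you use in~(2)) rests on $(\nabla^{Y})^{\ast}=\Gamma^{Y}\nabla^{Y}\Gamma^{Y}$, i.e.\ on the assumption that $\nabla$ is compatible with the Hermitian structure; you invoke this implicitly and it is harmless here, but it is worth flagging.
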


\vspace{0.3 cm}

\noindent
We next choose a unitary map $\hspace{0.1 cm} U_{{\mathcal L}_{0}} : (\Ker {\mathcal A})_{+i} \rightarrow (\Ker {\mathcal A})_{-i} \hspace{0.1 cm}$
so that $\graph(U_{{\mathcal L}_{0}}) = \Imm {\mathcal L}_{0}$ and define
$U_{\PP_{-, {\mathcal L}_{0}}}$, $U_{\Pi_{>, {\mathcal L}_{0}}} : (\Omega^{\bullet}(M, E)|_{Y})_{+i}
\rightarrow (\Omega^{\bullet}(M, E)|_{Y})_{-i}$ by

\begin{eqnarray}  \label{E:3.16}
U_{\PP_{-, {\mathcal L}_{0}}} = \hspace{0.1 cm}  U_{\PP_{-}}|_{(\Omega^{\bullet}(M, E)|_{Y})^{\ast}_{+i}} + U_{{\mathcal L}_{0}}, \qquad
U_{\Pi_{>, {\mathcal L}_{0}}} = \hspace{0.1 cm}  U_{\Pi_{>}}|_{(\Omega^{\bullet}(M, E)|_{Y})^{\ast}_{+i}} + U_{{\mathcal L}_{0}}.
\end{eqnarray}

\vspace{0.2 cm}

\noindent
Then $\graph(U_{\PP_{-, {\mathcal L}_{0}}}) \hspace{0.1 cm} = \hspace{0.1 cm} \Imm \PP_{-, {\mathcal L}_{0}}$ and
$\graph(U_{\Pi_{>, {\mathcal L}_{0}}}) \hspace{0.1 cm} = \hspace{0.1 cm} \Imm \Pi_{>, {\mathcal L}_{0}}$.
By (\ref{E:3.5}) we have

\begin{eqnarray}    \label{E:3.19}
 \PP_{-, {\mathcal L}_{0}} \hspace{0.1 cm} {\mathcal A} \hspace{0.1 cm} \PP_{-, {\mathcal L}_{0}}  & = &
 \PP_{+, {\mathcal L}_{1}} \hspace{0.1 cm} {\mathcal A} \hspace{0.1 cm} \PP_{+, {\mathcal L}_{1}} = 0.
\end{eqnarray}

\noindent
Moreover, Theorem 2.1 in [14] shows that for $ t \geq 0$,

\begin{eqnarray}    \label{E:3.20}
Q(t) & = & \sqrt{{\mathcal A}^{2} + t} \hspace{0.2 cm}  + \hspace{0.2 cm} \text{a} \hspace{0.2 cm}  \text{smoothing} \hspace{0.2 cm} \text{operator},
\end{eqnarray}

\noindent
which together with (\ref{E:3.19}) shows that

\begin{eqnarray*}
\PP_{-, {\mathcal L}_{0}} \left( Q(t) - {\mathcal A} \right) \PP_{-, {\mathcal L}_{0}} & = & \PP_{-, {\mathcal L}_{0}} Q(t) \PP_{-, {\mathcal L}_{0}}
\hspace{0.1 cm} = \hspace{0.1 cm} \PP_{-, {\mathcal L}_{0}} \sqrt{{\mathcal A}^{2} + t} \hspace{0.1 cm} \PP_{-, {\mathcal L}_{0}}
\hspace{0.2 cm}  + \hspace{0.1 cm} \text{a} \hspace{0.1 cm}  \text{smoothing} \hspace{0.2 cm} \text{operator}.
\end{eqnarray*}

\vspace{0.2 cm}

\noindent
The same equality holds for $\PP_{+, {\mathcal L}_{1}}$. This shows that $\PP_{-, {\mathcal L}_{0}}$ and $\PP_{+, {\mathcal L}_{1}}$ satisfy the item (4)
in the {\bf Condition A}.
Since $\PP_{-, {\mathcal L}_{0}}$ and $\PP_{+, {\mathcal L}_{1}}$ are orthogonal pseudodifferential projections and $U_{\PP_{+, {\mathcal L}_{1}}} = - U_{\PP_{-, {\mathcal L}_{0}}}$,
the assertion (3) in Lemma \ref{Lemma:3.2} shows that
$\PP_{-, {\mathcal L}_{0}}$ and $\PP_{+, {\mathcal L}_{1}}$ satisfy the item (2), (3) in the {\bf Condition A} and hence satisfy the {\bf Condition A}.

Let $({\widehat M}, {\widehat g})$ be a closed Riemannian manifold and $Y$ be a hypersurface of ${\widehat M}$ such that
${\widehat M} - Y$ has two components, whose closures are denoted by $M_{1}$, $M_{2}$, {\it i.e.} ${\widehat M} = M_{1} \cup_{Y} M_{2}$.
We assume that ${\widehat g}$ is a product metric near $Y$.
We denote the odd signature operator on ${\widehat M}$ by $\B_{{\widehat M}}$ and its restriction to $M_{1}$ and $M_{2}$
by $\B_{M_{1}}$ and $\B_{M_{2}}$.
We now apply Theorem \ref{Theorem:2.5} with $\PP_{1} = \PP_{-, {\mathcal L}_{0}}$ and $\PP_{2} = I - \PP_{-, {\mathcal L}_{0}} = \PP_{+, {\mathcal L}_{1}}$.
Then we have the following equality.

\begin{eqnarray}   \label{E:3.17}
&  & \log \Det^{\ast} \B^{2}_{{\widehat M}} - \log \Det^{\ast} \B^{2}_{M_{1}, \PP_{-, {\mathcal L}_{0}}} -
\log \Det^{\ast} \B^{2}_{M_{2}, \PP_{+, {\mathcal L}_{1}}} \hspace{0.1 cm} = \hspace{0.1 cm}
- \log 2 \cdot ( \zeta_{{\mathcal A}^{2}}(0) + l ) + 2 \log \ddet A_{0}    \nonumber  \\
& - & 2 (\log \ddet V_{M_{1, \PP_{-, {\mathcal L}_{0}}}} + \log \ddet V_{M_{2, \PP_{+, {\mathcal L}_{1}}}}) +
2 \log | \ddet^{\ast}_{\Fr} \left( \frac{1}{2} \left( I - K_{1}^{-1} K_{2} \right) \right) |     \nonumber    \\
& - & \left\{ \log \ddet^{\ast}_{\Fr} \left( I + \frac{1}{2} \left( U_{\PP_{-, {\mathcal L}_{0}}}^{-1} K_{1} +
K_{1}^{-1} U_{\PP_{-, {\mathcal L}_{0}}} \right) \right) +
\log \ddet^{\ast}_{\Fr} \left( I - \frac{1}{2} \left( U_{\PP_{-, {\mathcal L}_{0}}}^{-1} K_{2} +
K_{2}^{-1} U_{\PP_{-, {\mathcal L}_{0}}} \right) \right)  \right\}
\nonumber\\
& + &  \log 2 \left( \zeta_{\left( \PP_{-, {\mathcal L}_{0}} (Q_{1} - {\mathcal A}_{1}) \PP_{-, {\mathcal L}_{0}}) \right)}(0) +
\zeta_{\left( \PP_{+, {\mathcal L}_{1}} (Q_{2} - {\mathcal A}_{2}) \PP_{+, {\mathcal L}_{1}} \right)}(0) \right) ,
\end{eqnarray}

\vspace{0.2 cm}

\noindent
where $K_{i} : \left( \Omega^{\bullet}(M_{i}, E)|_{Y} \right)_{+i} \rightarrow \left( \Omega^{\bullet}(M_{i}, E)|_{Y} \right)_{-i}$
is a unitary operator such that
$\graph (K_{i}) = \Imm {\mathcal C}_{i}$, the Cauchy data space with respect to $\B_{M_{i}}$.
By Lemma \ref{Lemma:3.1} we have

\begin{eqnarray}  \label{E:3.25}
\log \ddet A_{0} & = &  \sum_{q=0}^{m} \log \ddet A_{0, q},    \\
\log \ddet V_{M_{1}, \PP_{-, {\mathcal L}_{0}}} & = & \sum_{q=0}^{m} \log \ddet V_{M_{1}, q, \rel} , \qquad
\log \ddet V_{M_{2}, \PP_{+, {\mathcal L}_{1}}} \hspace{0.1 cm} =  \hspace{0.1 cm}  \sum_{q=0}^{m} \log \ddet V_{M_{2}, q, \Abs}, \nonumber
\end{eqnarray}

\noindent
where $A_{0, q}$ is the Hermitian matrix obtained by simply replacing ${\mathcal D}^{2}_{{\widehat M}}$ in (\ref{E:2.15})
with $\B^{2}_{{\widehat M}, q}$ acting on $\Omega^{q}({\widehat M}, {\widehat E})$.
Similarly, $V_{M_{i}, q, \rel/\Abs}$ is the Hermitian matrix obtained by replacing ${\mathcal D}^{2}_{M, P}$ in (\ref{E:2.12}) with
$\B^{2}_{M_{i}, q, \rel/\Abs}$ acting on $\Omega^{q}(M_{i}, E)$ satisfying the relative/absolute boundary conditions.
Lemma 2.5 in [15] and Lemma \ref{Lemma:3.1} show that

\begin{eqnarray}   \label{E:3.18}
\Dim \Ker \left( \PP_{-, {\mathcal L}_{0}} (Q_{1} - {\mathcal A}_{1}) \PP_{-, {\mathcal L}_{0}}) \right) & = &
\sum_{q=0}^{m} \beta_{q}(M_{1}) = \sum_{q=0}^{m} \beta_{q}(M_{1}, Y),  \nonumber   \\
\Dim \Ker \left( \PP_{+, {\mathcal L}_{1}} (Q_{2} - {\mathcal A}_{2}) \PP_{+, {\mathcal L}_{1}} \right) & = &
\sum_{q=0}^{m} \beta_{q}(M_{2}, Y) = \sum_{q=0}^{m} \beta_{q}(M_{2}) ,
\end{eqnarray}

\noindent
where $\beta_{q}(M_{i}, Y) := \Dim H^{q}(M_{i}, Y ; E)$ and $\beta_{q}(M_{i}) := \Dim H^{q}(M_{i} ; E)$.
By (\ref{E:3.19}) and (\ref{E:3.18}), we have

\begin{eqnarray}    \label{E:3.21}
\zeta_{\left( \PP_{-, {\mathcal L}_{0}} Q_{1} \PP_{-, {\mathcal L}_{0}} \right)}(0) +
\Dim \Ker \left( \PP_{-, {\mathcal L}_{0}} Q_{1} \PP_{-, {\mathcal L}_{0}} \right)  & = &
\zeta_{\PP_{-, {\mathcal L}_{0}} \sqrt{{\mathcal A}_{1}^{2}} \PP_{-, {\mathcal L}_{0}}}(0)
+ \Dim \Ker \PP_{-, {\mathcal L}_{0}} \sqrt{{\mathcal A}_{1}^{2}} \PP_{-, {\mathcal L}_{0}} \nonumber \\
& = & \sum_{q=0}^{m-1} \left( \zeta_{\B_{Y, q}^{2}}(0) + \beta_{q}(Y) \right),
\end{eqnarray}

\noindent
where $\beta_{q}(Y) := \Dim \Ker H^{q}(Y ; E|_{Y})$.
Similarly, we have

\begin{eqnarray}   \label{E:3.22}
 \zeta_{\left( \PP_{+, {\mathcal L}_{1}} Q_{2} \PP_{+, {\mathcal L}_{1}} \right)}(0) +
\Dim \Ker \left( \PP_{+, {\mathcal L}_{1}} Q_{2} \PP_{+, {\mathcal L}_{1}} \right)   & = &
\sum_{q=0}^{m-1} \left( \zeta_{\B_{Y, q}^{2}}(0) + \beta_{q}(Y) \right).
\end{eqnarray}

\noindent
On the other hand,

\begin{eqnarray}    \label{E:3.23}
\zeta_{{\mathcal A}^{2}}(0) + l & = & 2 \sum_{q=0}^{m-1} \left( \zeta_{\B_{Y, q}^{2}}(0) + \beta_{q}(Y) \right).
\end{eqnarray}

\noindent
Summarizing the above argument, we have the following result, which is the main result of this section.

\vspace{0.2 cm}

\begin{theorem} \label{Theorem:1.1}
Let $({\widehat M}, {\widehat g})$ be a closed Riemannian manifold and $Y$ be a hypersurface of ${\widehat M}$ with ${\widehat M} = M_{1} \cup_{Y} M_{2}$.
We assume that ${\widehat g}$ is a product metric near $Y$. Then :

\begin{eqnarray*}
&  & \sum_{q=0}^{m} \left( \log \Det^{\ast} \B_{{\widehat M}, q}^{2} - \log \Det^{\ast} \B^{2}_{M_{1}, q, \PP_{-, {\mathcal L}_{0}}} -
\log \Det^{\ast} \B^{2}_{M_{2}, q, \PP_{+, {\mathcal L}_{1}}} \right)   \\
& = &  - \log 2 \cdot \sum_{q=0}^{m} \left( \beta_{q}(M_{1}) + \beta_{q}(M_{2}) \right) + 2 \sum_{q=0}^{m} \log \ddet A_{0, q}
 -    2 \sum_{q=0}^{m} \left( \log \ddet V_{M_{1, q, \rel}} + \log \ddet V_{M_{2, q, \Abs}} \right)    \\
& &  + \hspace{0.1 cm}  2 \log | \ddet^{\ast}_{\Fr} \left( \frac{1}{2} \left( I - K_{1}^{-1} K_{2} \right) \right) |
 -    \log \ddet^{\ast}_{\Fr} \left( I + \frac{1}{2} \left( U_{\PP_{-, {\mathcal L}_{0}}}^{-1} K_{1} \hspace{0.1 cm} + \hspace{0.1 cm}
K_{1}^{-1} U_{\PP_{-, {\mathcal L}_{0}}} \right) \right) \\
& &  - \hspace{0.1 cm}
\log \ddet^{\ast}_{\Fr} \left( I - \frac{1}{2} \left( U_{\PP_{-, {\mathcal L}_{0}}}^{-1} K_{2} +
K_{2}^{-1} U_{\PP_{-, {\mathcal L}_{0}}} \right) \right)  .
\end{eqnarray*}
\end{theorem}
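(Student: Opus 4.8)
The plan is to regard Theorem \ref{Theorem:1.1} as the degree-graded repackaging of the single identity (\ref{E:3.17}), which is itself the direct output of Theorem \ref{Theorem:2.5} applied to the pair $\PP_{1}=\PP_{-,{\mathcal L}_{0}}$, $\PP_{2}=I-\PP_{-,{\mathcal L}_{0}}=\PP_{+,{\mathcal L}_{1}}$. Since it was already verified that $\PP_{-,{\mathcal L}_{0}}$ and $\PP_{+,{\mathcal L}_{1}}$ satisfy {\bf Condition A} and that $U_{\PP_{+,{\mathcal L}_{1}}}=-U_{\PP_{-,{\mathcal L}_{0}}}$, the two Fredholm-determinant contributions attached to $i=1,2$ in Theorem \ref{Theorem:2.5} take the stated form $\log\ddet^{\ast}_{\Fr}(I+\frac{1}{2}(U_{\PP_{-,{\mathcal L}_{0}}}^{-1}K_{1}+K_{1}^{-1}U_{\PP_{-,{\mathcal L}_{0}}}))$ and $\log\ddet^{\ast}_{\Fr}(I-\frac{1}{2}(U_{\PP_{-,{\mathcal L}_{0}}}^{-1}K_{2}+K_{2}^{-1}U_{\PP_{-,{\mathcal L}_{0}}}))$; these, together with $2\log|\ddet^{\ast}_{\Fr}(\frac{1}{2}(I-K_{1}^{-1}K_{2}))|$, carry over from (\ref{E:3.17}) with their signs and need no further manipulation. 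It therefore remains to do two things: distribute the zeta-determinant and Gram-matrix determinant terms over the form degree $q$, and collapse the $\log 2$ terms.

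For the first task I would use that $\B^{2}$ preserves the degree, so that $\B^{2}_{\widehat M}=\bigoplus_{q}\B^{2}_{\widehat M,q}$, and that the projections $\PP_{-,{\mathcal L}_{0}}$, $\PP_{+,{\mathcal L}_{1}}$ act diagonally with respect to the grading, being assembled out of $\Imm\nabla^{Y}$, $\Imm\Gamma^{Y}\nabla^{Y}\Gamma^{Y}$ and the graded spaces ${\mathcal L}_{0},{\mathcal L}_{1}\subset{\mathcal H}^{\bullet}$. Hence each realization splits as an orthogonal direct sum over $q$, the associated zeta function is the sum of the degreewise zeta functions, and the zeta-determinant factorizes, giving $\log\Det^{\ast}\B^{2}_{\widehat M}=\sum_{q}\log\Det^{\ast}\B^{2}_{\widehat M,q}$ and likewise for the two pieces. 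The splitting of $\log\ddet A_{0}$ and of $\log\ddet V_{M_{1},\PP_{-,{\mathcal L}_{0}}}$, $\log\ddet V_{M_{2},\PP_{+,{\mathcal L}_{1}}}$ into $\sum_{q}\log\ddet A_{0,q}$, $\sum_{q}\log\ddet V_{M_{1},q,\rel}$, $\sum_{q}\log\ddet V_{M_{2},q,\Abs}$ is exactly (\ref{E:3.25}), which in turn rests on the identifications of the relevant kernels with $H^{q}(M_{i},Y;E)$ and $H^{q}(M_{i};E)$ furnished by Lemma \ref{Lemma:3.1}.

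The heart of the argument is the simplification of the $\log 2$ contributions. By (\ref{E:3.19}) one has $\PP_{-,{\mathcal L}_{0}}(Q_{1}-{\mathcal A}_{1})\PP_{-,{\mathcal L}_{0}}=\PP_{-,{\mathcal L}_{0}}Q_{1}\PP_{-,{\mathcal L}_{0}}$ and similarly on $M_{2}$, so I may replace the zeta functions appearing in (\ref{E:3.17}) by those of $\PP_{-,{\mathcal L}_{0}}Q_{1}\PP_{-,{\mathcal L}_{0}}$ and $\PP_{+,{\mathcal L}_{1}}Q_{2}\PP_{+,{\mathcal L}_{1}}$. Feeding in (\ref{E:3.21}) and (\ref{E:3.22}), each of these values at $0$ equals $\sum_{q=0}^{m-1}(\zeta_{\B^{2}_{Y,q}}(0)+\beta_{q}(Y))$ minus the dimension of the respective kernel, and by (\ref{E:3.18}) those kernel dimensions are $\sum_{q}\beta_{q}(M_{1})$ and $\sum_{q}\beta_{q}(M_{2})$. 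Adding the two and subtracting $\zeta_{{\mathcal A}^{2}}(0)+l=2\sum_{q=0}^{m-1}(\zeta_{\B^{2}_{Y,q}}(0)+\beta_{q}(Y))$ from (\ref{E:3.23}) makes the tangential zeta-values and the boundary Betti numbers cancel in pairs, leaving precisely $-\log 2\sum_{q=0}^{m}(\beta_{q}(M_{1})+\beta_{q}(M_{2}))$. Collecting this with the degree-split determinant terms and the unchanged Fredholm-determinant factors yields the asserted identity.

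I expect the only genuinely delicate point to be this last cancellation, since it is where the analytic input is concentrated: it relies on the vanishing $\PP{\mathcal A}\PP=0$, on the approximation $Q(t)=\sqrt{{\mathcal A}^{2}+t}+(\text{smoothing})$ from (\ref{E:3.20}) that underlies the $t\to\infty$ expansions demanded in item (4) of {\bf Condition A}, and on matching the kernel of the sandwiched Neumann jump operator with de Rham cohomology through Lemma 2.5 in [15] and Lemma \ref{Lemma:3.1}. Everything else is bookkeeping once (\ref{E:3.17}) is in hand.
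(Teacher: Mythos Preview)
Your proposal is correct and follows essentially the same approach as the paper: apply Theorem \ref{Theorem:2.5} with $\PP_{1}=\PP_{-,{\mathcal L}_{0}}$, $\PP_{2}=\PP_{+,{\mathcal L}_{1}}$ to obtain (\ref{E:3.17}), split the determinant and Gram-matrix terms over the form degree via (\ref{E:3.25}) and Lemma \ref{Lemma:3.1}, and collapse the $\log 2$ contributions using (\ref{E:3.19}), (\ref{E:3.18}), (\ref{E:3.21}), (\ref{E:3.22}), (\ref{E:3.23}). The cancellation you describe is precisely the one the paper carries out.
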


\noindent
{\it Remark} : The kernel of $ \left( I + \frac{1}{2} \left( U_{\PP_{-, {\mathcal L}_{0}}}^{-1} K_{1} \hspace{0.1 cm} + \hspace{0.1 cm}
K_{1}^{-1} U_{\PP_{-, {\mathcal L}_{0}}} \right) \right)$ consists of $\omega \in \left( \Omega^{\bullet}(M_{1}, E)|_{Y} \right)_{+i}$
such that $\omega -  U_{\PP_{-, {\mathcal L}_{0}}} \omega \hspace{0.1 cm} ( \hspace{0.1 cm} = \omega +  K_{1} \omega )$ can be extended to a solution of
$\B_{M_{1}, \PP_{-, {\mathcal L}_{0}}}$. The same result holds for $ \left( I - \frac{1}{2} \left( U_{\PP_{-, {\mathcal L}_{0}}}^{-1} K_{2} \hspace{0.1 cm} + \hspace{0.1 cm} K_{2}^{-1} U_{\PP_{-, {\mathcal L}_{0}}} \right) \right)$.

\vspace{0.3 cm}

\section{Gluing formula of Dirac Laplacians with respect to the absolute and relative boundary conditions}

\vspace{0.2 cm}

We continue to use the same notations as in the previous section. In this section we consider a double of de Rham complexes
$\hspace{0.1 cm} \Omega^{\bullet}(M, E \oplus E) := \Omega^{\bullet}(M, E) \oplus \Omega^{\bullet}(M, E)$, which was used in [24].
We define the odd signature operator ${\widetilde \B}$ and a boundary condition ${\widetilde \PP}$ in this context  as follows.

\begin{eqnarray}    \label{E:4.1}
{\widetilde \B} & = & \left( \begin{array}{clcr} 0 & \B \\ \B & 0 \end{array} \right) =
\left( \begin{array}{clcr} 0 & \Gamma \nabla + \nabla \Gamma \\ \Gamma \nabla + \nabla \Gamma & 0 \end{array} \right)  :
\Omega^{\bullet}(M, E \oplus E) \rightarrow \Omega^{\bullet}(M, E \oplus E)    \nonumber \\
{\widetilde \PP} & = &  \left( \begin{array}{clcr} \PP_{\rel} & 0 \\ 0 & \PP_{\Abs} \end{array} \right)  :
\Omega^{\bullet}(M, E \oplus E)|_{Y} \rightarrow \Omega^{\bullet}(M, E \oplus E)|_{Y},
\end{eqnarray}

\noindent
where $\PP_{\rel}$ and $\PP_{\Abs}$ are orthogonal projections defined by

\begin{eqnarray}    \label{E:4.2}
\PP_{\rel} (\omega_{\Tan}|_{Y} + du \wedge \omega_{\Nor}|_{Y}) = \omega_{\Tan}|_{Y}, \qquad
\PP_{\Abs} (\omega_{\Tan}|_{Y} + du \wedge \omega_{\Nor}|_{Y}) = \omega_{\Nor}|_{Y}.
\end{eqnarray}

\noindent
Then the realization ${\widetilde \B}^{2}_{{\widetilde \PP}}$ with respect to the boundary condition
${\widetilde \PP}$ is given as follows.

\begin{eqnarray}    \label{E:4.3}
\Dom \left( {\widetilde \B}^{2}_{{\widetilde \PP}} \right) & = & \left\{ \left( \begin{array}{clcr} \phi \\ \psi \end{array} \right)
\in \Omega^{\bullet}(M, E \oplus E) \mid  {\widetilde \PP} \left( \begin{array}{clcr} \phi|_{Y} \\ \psi|_{Y} \end{array} \right) = 0, \quad
{\widetilde \PP} \left( {\widetilde \B} \left( \begin{array}{clcr} \phi \\ \psi \end{array} \right)|_{Y} \right)  = 0 \right\}  \nonumber \\
& = & \left\{ \left( \begin{array}{clcr} \phi_{1} + du \wedge \phi_{2} \\ \psi_{1} + du \wedge \psi_{2} \end{array} \right)
 \mid \hspace{0.1 cm}  \phi_{1}|_{Y} = 0, \hspace{0.1 cm} (\partial_{u} \phi_{2})|_{Y} = 0, \hspace{0.1 cm} (\partial_{u} \psi_{1})|_{Y} = 0,
 \hspace{0.1 cm} \psi_{2}|_{Y} = 0,  \hspace{0.1 cm} \right\}.
\end{eqnarray}

\noindent
By (\ref{E:3.122}) and the Poincar\'e duality we have

\begin{eqnarray}    \label{E:4.4}
{\widetilde \B}^{2}_{{\widetilde \PP}} & = &
\left( \begin{array}{clcr} \B^{2}_{M, \rel} & 0 \\ 0 & \B^{2}_{M, \Abs} \end{array} \right),   \\
\log \Det^{\ast} {\widetilde \B}^{2}_{{\widetilde \PP}} & = & \sum_{q=0}^{m} \left( \log \Det^{\ast} \B^{2}_{M, q, \rel} + \log \Det^{\ast} \B^{2}_{M, q, \Abs} \right)
\hspace{0.1 cm} = \hspace{0.1 cm}  2 \sum_{q=0}^{m} \log \Det^{\ast} \B^{2}_{M, q, \rel}.    \nonumber
\end{eqnarray}

\vspace{0.2 cm}

We put

\begin{eqnarray}    \label{E:4.5}
I = \left( \begin{array}{clcr} 1 & 0 \\ 0 & 1 \end{array} \right), \qquad L = \left( \begin{array}{clcr} 0 & 1 \\ 1 & 0 \end{array} \right), \qquad
S = \left( \begin{array}{clcr} 1 & 0 \\ 0 & -1 \end{array} \right).
\end{eqnarray}

\noindent
If we write
$\left( \begin{array}{clcr} \phi_{1} + du \wedge \phi_{2} \\ \psi_{1} + du \wedge \psi_{2} \end{array} \right)$ by
$\left( \begin{array}{clcr} \phi_{1} \\ \phi_{2} \\ \psi_{1} \\ \psi_{2} \end{array} \right)$,
$\hspace{0.2 cm} {\widetilde \B}$ is written, near the boundary $Y$, by

\begin{eqnarray}  \label{E:4.6}
{\widetilde \B} & = & \frac{1}{\sqrt{(-1)^{m}}} \hspace{0.1 cm} \beta \hspace{0.1 cm} \Gamma^{Y} \left( \begin{array}{clcr} 0 & I \\ I & 0 \end{array} \right)
\left\{ \partial_{u} - \left( \nabla^{Y} + \Gamma^{Y} \nabla^{Y} \Gamma^{Y} \right)
\left( \begin{array}{clcr} L & 0 \\ 0 & L \end{array} \right) \right\}
\hspace{0.1 cm} = \hspace{0.1 cm}  {\widetilde G} \left( \partial_{u} + {\widetilde {\mathcal A}} \right).
\end{eqnarray}

\noindent
Comparing (\ref{E:4.6}) with (\ref{E:2.1}), we have

\begin{eqnarray}  \label{E:4.7}
{\widetilde G} = \frac{1}{\sqrt{(-1)^{m}}} \hspace{0.1 cm} \beta \hspace{0.1 cm} \Gamma^{Y}
\left( \begin{array}{clcr} 0 & I \\ I & 0 \end{array} \right), \qquad
{\widetilde {\mathcal A}} = - \left( \nabla^{Y} + \Gamma^{Y} \nabla^{Y} \Gamma^{Y} \right)
\left( \begin{array}{clcr} L & 0 \\ 0 & L \end{array} \right),
\end{eqnarray}

\noindent
which satisfy the relations in (\ref{E:2.2}).
We denote by ${\widetilde \Pi}_{>} := \Pi_{>} \oplus \Pi_{>}$ the orthogonal projection onto the space spanned by positive eigenforms of
${\widetilde {\mathcal A}}$.
We denote the $\pm i$-eigenspace of ${\widetilde G}$ by

\begin{eqnarray}    \label{E:4.8}
(\Omega^{\bullet}(M, E \oplus E)|_{Y})_{\pm i} & := & \frac{1}{2} (I \mp i {\widetilde G})
\left( \Omega^{\bullet}(M, E)|_{Y} \oplus \Omega^{\bullet}(M, E)|_{Y} \right).
\end{eqnarray}

\noindent
For instance, if $m$ is odd, simple computation shows that

\begin{eqnarray}   \label{E:4.9}
(\Omega^{\bullet}(M, E \oplus E)|_{Y})_{+i}  =
\Span \left( \begin{array}{clcr} \omega_{1} \\ \omega_{2} \\  - \beta \Gamma^{Y} \omega_{1} \\  - \beta \Gamma^{Y} \omega_{2} \end{array} \right), \hspace{0.1 cm}
(\Omega^{\bullet}(M, E \oplus E)|_{Y})_{-i}  \hspace{0.1 cm} = \hspace{0.1 cm}
\Span \left( \begin{array}{clcr} \omega_{1} \\ \omega_{2} \\  \beta \Gamma^{Y} \omega_{1} \\  \beta \Gamma^{Y} \omega_{2} \end{array} \right),
\end{eqnarray}

\vspace{0.2 cm}

\noindent
where $\omega_{1}$, $\omega_{2} \in \Omega^{\bullet}(Y, E|_{Y})$. This fact will be used in (\ref{E:4.24}) below.
Like (\ref{E:3.13}), we write

\begin{eqnarray}  \label{E:4.10}
\Omega^{\bullet}(M, E \oplus E)|_{Y}  & = &
\left( \Omega^{\bullet}(M, E \oplus E)|_{Y} \right)^{\ast} \oplus \Ker {\widetilde {\mathcal A}}, \quad
\left( \Omega^{\bullet}(M, E \oplus E)|_{Y} \right)^{\ast} =: (\Ker {\widetilde {\mathcal A}})^{\perp}.
\end{eqnarray}

\noindent
We define unitary maps $\hspace{0.1 cm} U_{{\widetilde \PP}}$,
$\hspace{0.1 cm} U_{{\widetilde \Pi}_{>}} :
\left( \Omega^{\bullet}(M, E \oplus E)|_{Y} \right)^{\ast} \rightarrow
\left( \Omega^{\bullet}(M, E \oplus E)|_{Y} \right)^{\ast} \hspace{0.1 cm}$ by  ((\ref{E:3.15}))

\begin{eqnarray}    \label{E:4.11}
U_{{\widetilde \PP}} =  \sqrt{(-1)^{m+1}} \hspace{0.1 cm} \beta \hspace{0.1 cm} \Gamma^{Y} \left( \begin{array}{clcr} 0 & - S \\ S & 0 \end{array} \right), \qquad
U_{{\widetilde \Pi}_{>}} = (\B_{Y}^{2})^{-\frac{1}{2}} \left( \nabla^{Y} + \Gamma^{Y} \nabla^{Y} \Gamma^{Y} \right)
\left( \begin{array}{clcr} - L & 0 \\ 0 & - L \end{array} \right).
\end{eqnarray}

\noindent
Here the domain of $U_{{\widetilde \PP}}$ can be naturally extended to $\Omega^{\bullet}(M, E \oplus E)|_{Y} $.
The following lemma is an analogue of Lemma \ref{Lemma:3.2}, whose proof is straightforward.

\begin{lemma} \label{Lemma:4.1}
(1) $\hspace{0.1 cm} U_{{\widetilde \PP}}$ and $U_{{\widetilde \Pi}_{>}}$ map $\left( \Omega^{\bullet}(M, E \oplus E)|_{Y} \right)^{\ast}_{\pm i}$ onto
$\left( \Omega^{\bullet}(M, E \oplus E)|_{Y} \right)^{\ast}_{\mp i}$. \newline
(2) $\hspace{0.1 cm} U_{{\widetilde \PP}} U_{{\widetilde \PP}} = - \Id$, $\hspace{0.1 cm} U_{{\widetilde \Pi}_{>}} U_{{\widetilde \Pi}_{>}} = \Id$ and
$\hspace{0.1 cm} U_{{\widetilde \Pi}_{>}} U_{{\widetilde \PP}} \hspace{0.1 cm} = \hspace{0.1 cm} U_{{\widetilde \PP}} U_{{\widetilde \Pi}_{>}}$.
Hence, $U_ {{\widetilde \PP}}^{\ast} = - U_{{\widetilde \PP}}$ and $U_{{\widetilde \Pi}_{>}}^{\ast} =  U_{{\widetilde \Pi}_{>}}$.  \newline
(3) $\hspace{0.1 cm} U_{{\widetilde \Pi}_{>}}^{\ast} U_{{\widetilde \PP}} + U_{{\widetilde \PP}}^{\ast} U_{{\widetilde \Pi}_{>}} = 0$. \newline
(4) $\hspace{0.1 cm} \Imm {\widetilde \PP} = \{ \omega + U_{{\widetilde \PP}} \omega \mid \omega \in
\left( \Omega^{\bullet}(M, E \oplus E)|_{Y} \right)_{+i} \} \hspace{0.1 cm}$,
$\hspace{0.2 cm} \Imm {\widetilde \Pi}_{>} = \{ \omega + U_{{\widetilde \Pi}_{>}} \omega \mid \omega \in
\left( \Omega^{\bullet}(M, E \oplus E)|_{Y} \right)^{\ast}_{+i} \}$.
\end{lemma}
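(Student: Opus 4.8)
The plan is to reduce all four assertions to the elementary algebra of $\beta$, $\Gamma^{Y}$ and $\nabla^{Y}$ on $\Omega^{\bullet}(Y,E|_{Y})$, together with the multiplication table of the constant matrices $I,L,S$ of (\ref{E:4.5}), exactly as in the proof of Lemma \ref{Lemma:3.2}. First I would record the relations used throughout. Since $\Gamma^{Y}$ shifts form degree from $q$ to $m-1-q$ while $\beta$ acts by $(-1)^{q}$, one gets $\beta\Gamma^{Y}=(-1)^{m-1}\Gamma^{Y}\beta$, hence $(\beta\Gamma^{Y})^{2}=(-1)^{m-1}\Id$. Writing $\delta^{Y}:=\Gamma^{Y}\nabla^{Y}\Gamma^{Y}$, flatness gives $(\nabla^{Y})^{2}=(\delta^{Y})^{2}=0$, so $(\nabla^{Y}+\delta^{Y})^{2}=\nabla^{Y}\delta^{Y}+\delta^{Y}\nabla^{Y}=(\B_{Y}^{2})^{-}+(\B_{Y}^{2})^{+}=\B_{Y}^{2}$ on $(\Ker{\widetilde{\mathcal A}})^{\perp}$; moreover $\beta(\nabla^{Y}+\delta^{Y})=-(\nabla^{Y}+\delta^{Y})\beta$ and $\Gamma^{Y}(\nabla^{Y}+\delta^{Y})\Gamma^{Y}=\nabla^{Y}+\delta^{Y}$, so that $\beta\Gamma^{Y}$ anticommutes with $\nabla^{Y}+\delta^{Y}$ and therefore commutes with $(\B_{Y}^{2})^{\pm 1/2}$. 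On the matrix side I would use $L^{2}=S^{2}=\Id$, $LS=-SL$, and the anticommutation $\left(\begin{smallmatrix}0&I\\I&0\end{smallmatrix}\right)\left(\begin{smallmatrix}0&-S\\S&0\end{smallmatrix}\right)=-\left(\begin{smallmatrix}0&-S\\S&0\end{smallmatrix}\right)\left(\begin{smallmatrix}0&I\\I&0\end{smallmatrix}\right)$. Finally, every constant block matrix occurring here is either a swap of the two $E$-copies or block-diagonal in the $(\Tan,\Nor)$ splitting, so it commutes with $\beta\Gamma^{Y}$ (which acts identically on both copies and block-diagonally in $(\Tan,\Nor)$) in spite of the degree shift of $\Gamma^{Y}$.

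Granting these relations, (1) and (2) are direct computations. For (1) it suffices to check that $U_{{\widetilde \PP}}$ and $U_{{\widetilde \Pi}_{>}}$ anticommute with ${\widetilde G}$: since $\left(\Omega^{\bullet}(M,E\oplus E)|_{Y}\right)^{\ast}_{\pm i}$ are by (\ref{E:4.8}) the $\pm i$-eigenspaces of ${\widetilde G}$, anticommutation interchanges them. For $U_{{\widetilde \PP}}$ this comes from pulling the two factors $\beta\Gamma^{Y}$ together (producing $(-1)^{m-1}$) and invoking the matrix anticommutation above, the parity-dependent prefactors $1/\sqrt{(-1)^{m}}$ and $\sqrt{(-1)^{m+1}}$ being common to both orders; the same argument, now using that $\beta\Gamma^{Y}$ and $\nabla^{Y}+\delta^{Y}$ each anticommute with the relevant block matrices, handles $U_{{\widetilde \Pi}_{>}}$. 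For (2) I would compute the squares directly: $U_{{\widetilde \PP}}^{2}=(-1)^{m+1}(\beta\Gamma^{Y})^{2}\left(\begin{smallmatrix}0&-S\\S&0\end{smallmatrix}\right)^{2}=(-1)^{m+1}(-1)^{m-1}(-\Id)=-\Id$, while $(\B_{Y}^{2})^{-1/2}$ commuting with $\nabla^{Y}+\delta^{Y}$, together with $(\nabla^{Y}+\delta^{Y})^{2}=\B_{Y}^{2}$ and $\left(\begin{smallmatrix}-L&0\\0&-L\end{smallmatrix}\right)^{2}=\Id$, gives $U_{{\widetilde \Pi}_{>}}^{2}=\Id$. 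Self-adjointness is then immediate: each operator is unitary, so $U^{\ast}=U^{-1}$, and $U_{{\widetilde \PP}}^{2}=-\Id$, $U_{{\widetilde \Pi}_{>}}^{2}=\Id$ force $U_{{\widetilde \PP}}^{\ast}=-U_{{\widetilde \PP}}$ and $U_{{\widetilde \Pi}_{>}}^{\ast}=U_{{\widetilde \Pi}_{>}}$. The commutation $U_{{\widetilde \Pi}_{>}}U_{{\widetilde \PP}}=U_{{\widetilde \PP}}U_{{\widetilde \Pi}_{>}}$ is the product of two cancelling sign changes: the operator factors $\beta\Gamma^{Y}$ and $\nabla^{Y}+\delta^{Y}$ anticommute, and so do the block matrices $\left(\begin{smallmatrix}0&-S\\S&0\end{smallmatrix}\right)$ and $\left(\begin{smallmatrix}-L&0\\0&-L\end{smallmatrix}\right)$ because $LS=-SL$.

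Assertion (3) then drops out of (2): substituting $U_{{\widetilde \PP}}^{\ast}=-U_{{\widetilde \PP}}$ and $U_{{\widetilde \Pi}_{>}}^{\ast}=U_{{\widetilde \Pi}_{>}}$ turns $U_{{\widetilde \Pi}_{>}}^{\ast}U_{{\widetilde \PP}}+U_{{\widetilde \PP}}^{\ast}U_{{\widetilde \Pi}_{>}}$ into $U_{{\widetilde \Pi}_{>}}U_{{\widetilde \PP}}-U_{{\widetilde \PP}}U_{{\widetilde \Pi}_{>}}$, which vanishes by the commutation just proved. For (4) I would verify the two graph descriptions by a direct computation on the eigenspace bases. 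Taking $\omega\in\left(\Omega^{\bullet}(M,E\oplus E)|_{Y}\right)_{+i}$ in the form (\ref{E:4.9}) (and its evident analogue for $m$ even), I apply $U_{{\widetilde \PP}}$ and form $\omega+U_{{\widetilde \PP}}\omega$; the computation shows that the normal component of the first $E$-copy and the tangential component of the second cancel, while the surviving tangential component of the first copy and normal component of the second range over all of $\Omega^{\bullet}(Y,E|_{Y})$ as $\omega$ varies. By (\ref{E:4.2}) this is exactly $\Imm{\widetilde \PP}=\Imm\PP_{\rel}\oplus\Imm\PP_{\Abs}$, giving the first identity, and $\Imm{\widetilde \Pi}_{>}$ follows the same way from $U_{{\widetilde \Pi}_{>}}$.

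The only genuinely delicate point is the bookkeeping. One must carry the parity-dependent scalars $\sqrt{(-1)^{m}}$, $\sqrt{(-1)^{m+1}}$ and $(-1)^{m-1}$ without error, and one must be certain that $\beta\Gamma^{Y}$ really does commute with the constant block matrices even though $\Gamma^{Y}$ changes degree — which is why it matters that $L$, $S$ and the copy-swap are all compatible with the $(\Tan,\Nor)$ and two-copy splittings. In (4) the same parity care is needed, since (\ref{E:4.9}) is recorded only for $m$ odd and the case $m$ even must be treated by the analogous formula. Apart from this sign accounting the argument is a mechanical verification, which is why it is stated to be straightforward.
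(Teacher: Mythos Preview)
Your proposal is correct and matches the paper's approach: the paper gives no proof beyond declaring the lemma ``straightforward'' and pointing to Lemma \ref{Lemma:3.2} as the analogue, and what you have written is precisely the direct algebraic verification this invites, carried out via the relations among $\beta$, $\Gamma^{Y}$, $\nabla^{Y}+\Gamma^{Y}\nabla^{Y}\Gamma^{Y}$ and the block matrices $I,L,S$. Your bookkeeping of the parity signs and of the commutation between $\beta\Gamma^{Y}$ and the constant block matrices is exactly the care the computation requires.
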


\vspace{0.2 cm}

\noindent
{\it Remark} : It is not difficult to see that there is no unitary map from $(\Omega^{\bullet}(M, E)|_{Y})_{+i}$ to
$(\Omega^{\bullet}(M, E)|_{Y})_{-i}$ whose graph is $\Imm \PP_{\rel}$ or $\Imm \PP_{\Abs}$.
Hence, we cannot apply Theorem \ref{Theorem:2.5} to this case. This is the reason why we consider the double of
de Rham complexes as above.

\vspace{0.2 cm}

\noindent
It is straightforward that

\begin{eqnarray}  \label{E:4.12}
{\widetilde \PP} \hspace{0.1 cm} {\widetilde {\mathcal A}} \hspace{0.1 cm} {\widetilde \PP}
 \hspace{0.1 cm} = \hspace{0.1 cm} (I - {\widetilde \PP}) \hspace{0.1 cm} {\widetilde {\mathcal A}} \hspace{0.1 cm} (I - {\widetilde \PP})
  \hspace{0.1 cm} = \hspace{0.1 cm} 0,
\end{eqnarray}

\noindent
and by Theorem 2.1 in [14] (cf. (\ref{E:3.20})) we have

\begin{eqnarray}    \label{E:4.13}
\hspace{0.2 cm} {\widetilde Q}(t)  & = &  \left( \begin{array}{clcr} Q(t) & 0 \\ 0 & Q(t) \end{array} \right)
\hspace{0.2 cm}  = \hspace{0.2 cm}
\left( \begin{array}{clcr} \sqrt{{\mathcal A}^{2} + t} & 0 \\ 0 & \sqrt{{\mathcal A}^{2} + t} \end{array} \right)
\hspace{0.2 cm}  + \hspace{0.2 cm} \text{a} \hspace{0.2 cm}  \text{smoothing} \hspace{0.2 cm} \text{operator},
\end{eqnarray}

\noindent
which shows that ${\widetilde \PP}$ and $I - {\widetilde \PP}$ satisfy the item (4) in the {\bf Condition A}.
This fact and the assertion (3) in Lemma \ref{Lemma:4.1} show that ${\widetilde \PP}$ and $I - {\widetilde \PP}$ satisfy the {\bf Condition A},
as in the previous section,

We next consider a partitioned manifold ${\widehat M} = M_{1} \cup_{Y} M_{2}$ as before.
We assume the same assumptions as in Theorem \ref{Theorem:2.5}.
Let ${\widetilde K}_{i} : \left( \Omega^{\bullet}(M_{i}, E \oplus E)|_{Y} \right)_{+i} \rightarrow
\left( \Omega^{\bullet}(M_{i}, E \oplus E)|_{Y} \right)_{-i}$ be a unitary operator such that
$\graph ({\widetilde K}_{i}) = \Imm {\mathcal {\widetilde C}}_{i}$, the Cauchy data space with respect to ${\widetilde \B}_{M_{i}}$.
We denote by ${\widetilde Q}_{i}$ the Neumann jump operator for ${\widetilde \B}^{2}_{M_{i}}$ on $M_{i}$ and
by ${\widetilde {\mathcal A}}_{i}$ the tangential Dirac operator of ${\widetilde \B}_{M_{i}}$.
We now apply Theorem \ref{Theorem:2.5} with $\PP_{1} = {\widetilde \PP}$ and $\PP_{2} = I - {\widetilde \PP}$.
Since $U_{I - {\widetilde \PP}} = - U_{{\widetilde \PP}}$, we have the following equality.

\begin{eqnarray}   \label{E:4.14}
&  & \log \Det^{\ast} {\widetilde \B}^{2}_{{\widehat M}} - \log \Det^{\ast} {\widetilde \B}^{2}_{M_{1}, {\widetilde \PP}} -
\log \Det^{\ast} {\widetilde \B}_{M_{2}, I - {\widetilde \PP}} \hspace{0.1 cm} = \hspace{0.1 cm} -
\log 2 \cdot ( \zeta_{{\widetilde {\mathcal A}}^{2}}(0) + {\widetilde l} )  \nonumber \\
& + &   2 \log \ddet {\widetilde A}_{0}
 - 2 \left( \log \ddet {\widetilde V}_{M_{1}, {\widetilde \PP}} + \log \ddet {\widetilde V}_{M_{2}, I - {\widetilde \PP}} \right) +
 2 \log | \ddet^{\ast}_{\Fr} \left( \frac{1}{2} \left( I - {\widetilde K}_{1}^{-1} {\widetilde K}_{2} \right) \right) |   \nonumber\\
& - & \left\{ \log \ddet^{\ast}_{\Fr} \left( I + \frac{1}{2} \left( U_{{\widetilde \PP}}^{-1} {\widetilde K}_{1} + {\widetilde K}_{1}^{-1} U_{{\widetilde \PP}} \right) \right)  +
\log \ddet^{\ast}_{\Fr} \left( I - \frac{1}{2} \left( U_{{\widetilde \PP}}^{-1} {\widetilde K}_{2} + {\widetilde K}_{2}^{-1} U_{{\widetilde \PP}} \right) \right)
\right\}   \nonumber \\
& + & \log 2 \cdot \left( \zeta_{\left( ( I - {\widetilde \PP} ) ({\widetilde Q}_{1} - {\widetilde {\mathcal A}}_{1}) ( I - {\widetilde \PP}) \right)}(0) +
\zeta_{\left( {\widetilde \PP} ({\widetilde Q}_{2} - {\widetilde {\mathcal A}}_{2}) {\widetilde \PP} \right)}(0) \right) .
\end{eqnarray}

\vspace{0.2 cm}

\noindent
With the same notations in (\ref{E:3.25}), we note that

\begin{eqnarray}   \label{E:4.15}
\log \ddet {\widetilde A}_{0} & = & 2 \sum_{q=0}^{m} \log \ddet A_{0, q},    \\
\log \ddet {\widetilde V}_{M_{i}, {\widetilde \PP}} & = &
\log \ddet {\widetilde V}_{M_{i}, I - {\widetilde \PP}} \hspace{0.1 cm} = \hspace{0.1 cm}
\sum_{q=0}^{m} \left( \log \ddet V_{M_{i}, q, \rel} + \log \ddet V_{M_{i}, q, \Abs} \right)  \nonumber  \\
& = & 2 \sum_{q=0}^{m} \log \ddet V_{M_{i}, q, \rel} \hspace{0.1 cm} = \hspace{0.1 cm}
2 \sum_{q=0}^{m} \log \ddet V_{M_{i}, q, \Abs}. \nonumber
\end{eqnarray}

\noindent
Lemma 2.5 in [15] shows that

\begin{eqnarray}    \label{E:4.16}
\Dim \Ker \left( {\widetilde \PP} ({\widetilde Q}_{i} - {\widetilde {\mathcal A}}_{i}) {\widetilde \PP}) \right)  \hspace{0.1 cm} = \hspace{0.1 cm}
\Dim \Ker \left( (I - {\widetilde \PP}) ({\widetilde Q}_{i} - {\widetilde {\mathcal A}}_{i}) (I - {\widetilde \PP}) \right)   \hspace{0.1 cm} = \hspace{0.1 cm}
 \sum_{q=0}^{m} \left( \beta_{q}(M_{i}) + \beta_{q}(M_{i}, Y) \right) 
\end{eqnarray}

\noindent
Since
$\hspace{0.1 cm} \Imm {\widetilde \PP} = \Imm ( I - {\widetilde \PP}) = \oplus_{q=0}^{m-1} \left( \Omega^{q}(Y, E|_{Y}) \oplus \Omega^{q}(Y, E|_{Y}) \right)
\hspace{0.1 cm}$, the equalities (\ref{E:4.12}) and (\ref{E:4.13}) lead to

\begin{eqnarray}    \label{E:4.18}
& & \zeta_{\left( {\widetilde \PP} {\widetilde Q}_{i} {\widetilde \PP} \right)}(0) +
\Dim \Ker \left( {\widetilde \PP} {\widetilde Q}_{i} {\widetilde \PP} \right) =
\zeta_{\left( (I - {\widetilde \PP}) {\widetilde Q}_{i} (I - {\widetilde \PP}) \right)}(0) +
\Dim \Ker \left( (I - {\widetilde \PP}) {\widetilde Q}_{i} (I - {\widetilde \PP}) \right)   \nonumber  \\
& = & 2 \hspace{0.1 cm} \left( \zeta_{\sqrt{\B^{2}_{Y}}}(0) + \Dim \Ker \sqrt{\B^{2}_{Y}} \right)
\hspace{0.1 cm} = \hspace{0.1 cm} 2 \sum_{q=0}^{m-1} \left( \zeta_{\B^{2}_{Y, q}}(0) + \Dim \Ker \B^{2}_{Y, q} \right).
\end{eqnarray}

\noindent
Hence, by (\ref{E:4.16}) and (\ref{E:4.18}) we have

\begin{eqnarray}    \label{E:4.19}
 \zeta_{\left( {\widetilde \PP} ({\widetilde Q}_{1} - {\widetilde {\mathcal A}}_{1}) {\widetilde \PP}) \right)}(0) +
\zeta_{\left( (I - {\widetilde \PP}) ({\widetilde Q}_{2} - {\widetilde {\mathcal A}}_{2}) (I - {\widetilde \PP}) \right)}(0)
& = & \hspace{0.1 cm} 4 \sum_{q=0}^{m-1} \left( \zeta_{\B^{2}_{Y, q}}(0) +  \Dim \Ker \B^{2}_{Y, q} \right)   \nonumber \\
& & - \hspace{0.1 cm} 2 \sum_{q=0}^{m} \left( \beta_{q}(M_{1}) + \beta_{q}(M_{2}) \right).
\end{eqnarray}

\noindent
Similarly, by (\ref{E:4.7}) we have

\begin{eqnarray}  \label{E:4.20}
{\widetilde {\mathcal A}}^{2} \hspace{0.1 cm} = \hspace{0.1 cm}  \B^{2}_{Y} \left( \begin{array}{clcr}  I & 0 \\ 0 &  I \end{array} \right), \qquad \text{and} \qquad
\zeta_{{\widetilde {\mathcal A}}^{2}}(0) + {\widetilde l} \hspace{0.1 cm} = \hspace{0.1 cm} 4 \sum_{q=0}^{m-1} \left( \zeta_{\B^{2}_{Y, q}}(0) +  \Dim \Ker \B^{2}_{Y, q} \right).
\end{eqnarray}

\noindent
Hence (\ref{E:4.14}) can be rewritten as follows.

\begin{eqnarray}   \label{E:4.21}
&  & \sum_{q=0}^{m} \left( \log \Det^{\ast} \B^{2}_{{\widehat M}, q} - \log \Det^{\ast} \B^{2}_{M_{1}, q, \rel} - \log \Det^{\ast} \B^{2}_{M_{2}, q, \Abs} \right)  \hspace{0.1 cm}  =  \hspace{0.1 cm}  - \log 2 \sum_{q=0}^{m} \left( \beta_{q}(M_{1}) + \beta_{q}(M_{2}) \right)   \nonumber   \nonumber \\
& + &    2 \sum_{q=0}^{m} \log \ddet A_{0, q} \hspace{0.1 cm} - \hspace{0.1 cm} 2
\sum_{q=0}^{m} \left( \log \ddet V_{M_{1, q, \rel}} + \log \ddet V_{M_{2, q, \Abs}} \right)
\hspace{0.1 cm}  + \hspace{0.1 cm} \log | \ddet^{\ast}_{\Fr} \left( \frac{1}{2} \left( I - {\widetilde K}_{1}^{-1} {\widetilde K}_{2} \right) \right) |   \nonumber \\
& - & \frac{1}{2} \left\{ \log \ddet^{\ast}_{\Fr} \left( I + \frac{1}{2}
\left( U_{{\widetilde \PP}}^{-1} {\widetilde K}_{1} + {\widetilde K}_{1}^{-1} U_{{\widetilde \PP}} \right) \right) +
\log \ddet^{\ast}_{\Fr} \left( I - \frac{1}{2} \left( U_{{\widetilde \PP}}^{-1} {\widetilde K}_{2} + {\widetilde K}_{2}^{-1} U_{{\widetilde \PP}} \right) \right) \right\}.
\end{eqnarray}

\vspace{0.2 cm}

Finally, we analyze the last three terms in (\ref{E:4.21}).
We discuss only the case when the dimension of ${\widehat M}$ is odd.
The same method can be used for an even dimensional case.
From now on we assume that ${\widehat M}$ is odd dimensional.
From (\ref{E:3.5}) and (\ref{E:3.14}) we have

\begin{eqnarray}   \label{E:4.23}
\left( \Omega^{\bullet}(M, E)|_{Y} \right)_{\pm i} & = &  \left\{ \frac{1}{2} \left( I \mp \beta \Gamma^{Y} \right) \left( \begin{array}{clcr}  \omega_{1}     \\
\omega_{2} \end{array} \right) \hspace{0.1 cm} \mid \hspace{0.1 cm} \omega_{1}, \omega_{2} \in \Omega^{\bullet}(Y, E|_{Y}) \right\}.
\end{eqnarray}

\noindent
We recall the Calder\'on projector $\hspace{0.1 cm} {\mathcal C}_{1} : \Omega^{\bullet}(M, E)|_{Y} \rightarrow \Omega^{\bullet}(M, E)|_{Y} \hspace{0.1 cm}$ for $\B_{M_{1}}$ and the corresponding unitary operator
$\hspace{0.1 cm} K_{1} : \left( \Omega^{\bullet}(M, E)|_{Y} \right)_{+i} \rightarrow \left( \Omega^{\bullet}(M, E)|_{Y} \right)_{-i} \hspace{0.1 cm}$ so that $\graph(K_{1}) = \Imm {\mathcal C}_{1}$.
Let $\hspace{0.1 cm} {\widetilde {\mathcal C}}_{1} : \Omega^{\bullet}(M, E \oplus E)|_{Y} \rightarrow \Omega^{\bullet}(M, E \oplus E)|_{Y} \hspace{0.1 cm}$
be the Calder\'on projector for ${\widetilde \B}_{M_{1}}$.
From the definition of ${\widetilde \B}$ ((\ref{E:4.1})), we have

\begin{eqnarray}  \label{E:4.2323}
\Imm {\widetilde {\mathcal C}}_{1} & = & \Imm {\mathcal C}_{1} \oplus \Imm {\mathcal C}_{1}.
\end{eqnarray}

\noindent
Now consider the unitary operator
$\hspace{0.1 cm} {\widetilde K}_{1} : \left( \Omega^{\bullet}(M, E \oplus E)|_{Y} \right)_{+i} \rightarrow \left( \Omega^{\bullet}(M, E \oplus E)|_{Y} \right)_{-i} \hspace{0.1 cm}$ for
$\hspace{0.1 cm} {\widetilde {\mathcal C}}_{1}$.
Then, (\ref{E:4.2323}) implies that
for $x \in \left( \Omega^{\bullet}(M, E \oplus E)|_{Y} \right)_{+i}$, $\hspace{0.2 cm} x + {\widetilde K}_{1} x$ is expressed by
$\left( \begin{array}{clcr} y + K_{1}y \\ z + K_{1}z \end{array} \right)$ for some $y, z \in \left( \Omega^{\bullet}(M, E)|_{Y} \right)_{+i}$.
Hence, using (\ref{E:4.9}), ${\widetilde K}_{1}$ is described explicitly as follows.

\begin{eqnarray}   \label{E:4.24}
{\widetilde K}_{1} \left( \begin{array}{clcr} \omega_{1} \\ \omega_{2} \\ - \beta \Gamma^{Y} \omega_{1} \\ - \beta \Gamma^{Y} \omega_{2} \end{array} \right)
& = & {\widetilde K}_{1}
\left( \begin{array}{clcr} \frac{I - \beta \Gamma^{Y}}{2} \left( \begin{array}{clcr} \omega_{1} \\ \omega_{2} \end{array} \right) \hspace{0.1 cm} + \hspace{0.1 cm}
\frac{ I + \beta \Gamma^{Y} }{2} \left( \begin{array}{clcr} \omega_{1} \\ \omega_{2} \end{array} \right)  \\
\frac{ I - \beta \Gamma^{Y} }{2} \left( \begin{array}{clcr} \omega_{1} \\ \omega_{2} \end{array} \right) \hspace{0.1 cm} - \hspace{0.1 cm}
\frac{ I + \beta \Gamma^{Y} }{2} \left( \begin{array}{clcr} \omega_{1} \\ \omega_{2} \end{array} \right) \end{array} \right)  \nonumber  \\
& = &
\left( \begin{array}{clcr} K_{1} \frac{I - \beta \Gamma^{Y}}{2} \left( \begin{array}{clcr} \omega_{1} \\ \omega_{2} \end{array} \right) \hspace{0.1 cm} + \hspace{0.1 cm}
K_{1}^{-1} \frac{ I + \beta \Gamma^{Y} }{2} \left( \begin{array}{clcr} \omega_{1} \\ \omega_{2} \end{array} \right)  \\
K_{1} \frac{ I - \beta \Gamma^{Y} }{2} \left( \begin{array}{clcr} \omega_{1} \\ \omega_{2} \end{array} \right) \hspace{0.1 cm} - \hspace{0.1 cm}
K_{1}^{-1} \frac{ I + \beta \Gamma^{Y} }{2} \left( \begin{array}{clcr} \omega_{1} \\ \omega_{2} \end{array} \right) \end{array} \right).
\end{eqnarray}

\noindent
Since
$K_{1} \frac{I - \beta \Gamma^{Y}}{2} \left( \begin{array}{clcr} \omega_{1} \\ \omega_{2} \end{array} \right) \in \left( \Omega^{\bullet}(M, E)|_{Y} \right)_{-i}$ and
$K_{1}^{-1} \frac{ I + \beta \Gamma^{Y} }{2} \left( \begin{array}{clcr} \omega_{1} \\ \omega_{2} \end{array} \right) \in \left( \Omega^{\bullet}(M, E)|_{Y} \right)_{+i}$,
by (\ref{E:4.23}) we have

\begin{eqnarray*}
\beta \Gamma^{Y} K_{1} \frac{I - \beta \Gamma^{Y}}{2} \left( \begin{array}{clcr} \omega_{1} \\ \omega_{2} \end{array} \right) =
K_{1} \frac{I - \beta \Gamma^{Y}}{2} \left( \begin{array}{clcr} \omega_{1} \\ \omega_{2} \end{array} \right),
\hspace{0.2 cm}
\beta \Gamma^{Y} K_{1}^{-1} \frac{ I + \beta \Gamma^{Y} }{2} \left( \begin{array}{clcr} \omega_{1} \\ \omega_{2} \end{array} \right)  =
- K_{1}^{-1} \frac{ I + \beta \Gamma^{Y} }{2} \left( \begin{array}{clcr} \omega_{1} \\ \omega_{2} \end{array} \right),
\end{eqnarray*}

\noindent
which leads to

\begin{eqnarray}   \label{E:4.25}
& & U_{{\widetilde \PP}}^{-1} {\widetilde K}_{1} \left( \begin{array}{clcr} \omega_{1} \\ \omega_{2} \\ - \beta \Gamma^{Y} \omega_{1} \\ - \beta \Gamma^{Y} \omega_{2} \end{array} \right) \hspace{0.1 cm} = \hspace{0.1 cm}
\left( \begin{array}{clcr} S K_{1} \frac{I - \beta \Gamma^{Y}}{2} \left( \begin{array}{clcr} \omega_{1} \\ \omega_{2} \end{array} \right) \hspace{0.1 cm} + \hspace{0.1 cm}
S K_{1}^{-1} \frac{ I + \beta \Gamma^{Y} }{2} \left( \begin{array}{clcr} \omega_{1} \\ \omega_{2} \end{array} \right)  \\
- S K_{1} \frac{ I - \beta \Gamma^{Y} }{2} \left( \begin{array}{clcr} \omega_{1} \\ \omega_{2} \end{array} \right) \hspace{0.1 cm} + \hspace{0.1 cm}
S K_{1}^{-1} \frac{ I + \beta \Gamma^{Y} }{2} \left( \begin{array}{clcr} \omega_{1} \\ \omega_{2} \end{array} \right) \end{array} \right)  \nonumber  \\
& = &
\left( \begin{array}{clcr} S K_{1} \frac{I - \beta \Gamma^{Y}}{2} \left( \begin{array}{clcr} \omega_{1} \\ \omega_{2} \end{array} \right) \hspace{0.1 cm} + \hspace{0.1 cm}
S K_{1}^{-1} \frac{ I + \beta \Gamma^{Y} }{2} \left( \begin{array}{clcr} \omega_{1} \\ \omega_{2} \end{array} \right)  \\
- \beta \Gamma^{Y} \left\{ S K_{1} \frac{ I - \beta \Gamma^{Y} }{2} \left( \begin{array}{clcr} \omega_{1} \\ \omega_{2} \end{array} \right) \hspace{0.1 cm} + \hspace{0.1 cm}
S K_{1}^{-1} \frac{ I + \beta \Gamma^{Y} }{2} \left( \begin{array}{clcr} \omega_{1} \\ \omega_{2} \end{array} \right) \right\} \end{array} \right).
\end{eqnarray}

\vspace{0.2 cm}

\noindent
We define an isomorphism

\begin{eqnarray}   \label{E:4.26}
\Psi : \Omega^{\bullet}(M, E)|_{Y} \rightarrow \left( \Omega^{\bullet}(M, E \oplus E)|_{Y} \right)_{+i} \quad \text{by} \quad
\Psi \left( \begin{array}{clcr} \omega_{1} \\ \omega_{2} \end{array} \right) \hspace{0.1 cm} = \hspace{0.1 cm}
\left( \begin{array}{clcr} \omega_{1} \\ \omega_{2} \\ - \beta \Gamma^{Y} \omega_{1} \\ - \beta \Gamma^{Y} \omega_{2} \end{array} \right) ,
\end{eqnarray}

\noindent
which leads to

\begin{eqnarray}   \label{E:4.27}
\Psi^{-1} U_{{\widetilde \PP}}^{-1} {\widetilde K}_{1} \Psi \left( \begin{array}{clcr} \omega_{1} \\ \omega_{2} \end{array} \right) & = &
S K_{1} \frac{I - \beta \Gamma^{Y}}{2} \left( \begin{array}{clcr} \omega_{1} \\ \omega_{2} \end{array} \right) \hspace{0.1 cm} + \hspace{0.1 cm}
S K_{1}^{-1} \frac{ I + \beta \Gamma^{Y} }{2} \left( \begin{array}{clcr} \omega_{1} \\ \omega_{2} \end{array} \right) .
\end{eqnarray}

\noindent
By the same way, we have

\begin{eqnarray}   \label{E:4.28}
\Psi^{-1} {\widetilde K}_{1}^{-1} U_{{\widetilde \PP}} \Psi \left( \begin{array}{clcr} \omega_{1} \\ \omega_{2} \end{array} \right) & = &
K_{1} S \frac{I - \beta \Gamma^{Y}}{2} \left( \begin{array}{clcr} \omega_{1} \\ \omega_{2} \end{array} \right) \hspace{0.1 cm} + \hspace{0.1 cm}
K_{1}^{-1} S \frac{ I + \beta \Gamma^{Y} }{2} \left( \begin{array}{clcr} \omega_{1} \\ \omega_{2} \end{array} \right) .
\end{eqnarray}

\noindent
Since $\hspace{0.1 cm} \Omega^{\bullet}(M, E)|_{Y} =  \left( \Omega^{\bullet}(Y, E)|_{Y} \right)_{+i}  \oplus  \left( \Omega^{\bullet}(Y, E)|_{Y} \right)_{-i} \hspace{0.1 cm} $,
we use this decomposition to write

\begin{eqnarray}   \label{E:4.29}
\Psi^{-1} \left( U_{{\widetilde \PP}}^{-1} {\widetilde K}_{1} + {\widetilde K}_{1}^{-1} U_{{\widetilde \PP}} \right) \Psi & = &
 \left( \begin{array}{clcr}  0 & S K_{1}^{-1} + K_{1}^{-1} S  \\ S K_{1} + K_{1} S  &  0  \end{array} \right) \nonumber   \\
& = & \left( \begin{array}{clcr}  0 & ( S K_{1} + K_{1} S)^{\ast}  \\ S K_{1} + K_{1} S  &   0  \end{array} \right) .
\end{eqnarray}

\noindent
Hence, we have

\begin{eqnarray}   \label{E:4.30}
\ddet^{\ast}_{\Fr} \left( I + \frac{1}{2} \left( U_{{\widetilde \PP}}^{-1} {\widetilde K}_{1} + {\widetilde K}_{1}^{-1} U_{{\widetilde \PP}} \right) \right) & = &
\ddet^{\ast}_{\Fr} \left( \begin{array}{clcr}  I & \frac{1}{2} ( S K_{1} + K_{1} S)^{\ast}  \\ \frac{1}{2} (S K_{1} + K_{1} S)  & I  \end{array} \right)  \nonumber \\
& = & \ddet^{\ast}_{\Fr} \left( I - \frac{1}{4} ( S K_{1} + K_{1} S)^{\ast} (S K_{1} + K_{1} S) \right).
\end{eqnarray}

\noindent
To analyze (\ref{E:4.30}), we note that

\begin{eqnarray}   \label{E:4.99}
\left(\Omega^{\bullet}(Y, E)|_{Y}\right)_{\pm i} & = &
\left( \begin{array}{clcr} \Omega^{\bullet}(Y, E|_{Y})_{\pm} \\ \oplus \\  \Omega^{\bullet}(Y, E|_{Y})_{\pm} \end{array} \right), \quad
\text{where} \quad
\Omega^{\bullet}(Y, E|_{Y})_{\pm} := \frac{I \mp \beta \Gamma^{Y}}{2} \Omega^{\bullet}(Y, E|_{Y}).
\end{eqnarray}

\noindent
According to this decomposition, we may write
$K_{1} : \left( \begin{array}{clcr} \Omega^{\bullet}(Y, E|_{Y})_{+} \\ \oplus \\  \Omega^{\bullet}(Y, E|_{Y})_{+} \end{array} \right) \rightarrow
\left( \begin{array}{clcr} \Omega^{\bullet}(Y, E|_{Y})_{-} \\ \oplus \\  \Omega^{\bullet}(Y, E|_{Y})_{-} \end{array} \right)$
by

\begin{eqnarray}   \label{E:4.31}
K_{1} & = & \left( \begin{array}{clcr} A_{1} & B_{1} \\ C_{1} & D_{1} \end{array} \right), \quad \text{where} \quad
A_{1}, B_{1}, C_{1}, D_{1} : \Omega^{\bullet}(Y, E|_{Y})_{+} \rightarrow \Omega^{\bullet}(Y, E|_{Y})_{-}.
\end{eqnarray}

\noindent
We note that $\Gamma = i \beta \Gamma^{Y} \left( \begin{array}{clcr} 0 & -1 \\ 1 & 0 \end{array} \right)$ preserves the decomposition
$\left( \Omega^{\bullet}(Y, E)|_{Y} \right)_{\pm i}$ and commutes with $\B_{M_{1}}$, which implies that
$K_{1}$ commutes with $\Gamma$.
Since $\Omega^{\bullet}(Y, E|_{Y})_{\pm} \hspace{0.1 cm}$ are $(\mp 1)$-eigenspaces of $\beta \Gamma^{Y}$,
we have
$\left( \begin{array}{clcr} 0 & -1 \\ 1 & 0 \end{array} \right) K_{1} = K_{1} \left( \begin{array}{clcr} 0 & 1 \\ -1 & 0 \end{array} \right)$,
which shows that

\begin{eqnarray}   \label{E:4.33}
B_{1} \hspace{0.1 cm} = \hspace{0.1 cm} C_{1}, \qquad A_{1} \hspace{0.1 cm} = \hspace{0.1 cm} - D_{1} .
\end{eqnarray}

\noindent
Hence, we have

\begin{eqnarray}   \label{E:4.34}
S K_{1} + K_{1} S & = & \left( \begin{array}{clcr} 2 A_{1} & 0 \\ 0 & 2 A_{1} \end{array} \right) .
\end{eqnarray}

\noindent
Since $\hspace{0.1 cm} K_{1} - U_{\Pi_{>}} \hspace{0.1 cm}$ is a trace class operator ((\ref{E:2.10})) and
$\hspace{0.1 cm} U_{\Pi_{>}} = (\B_{Y}^{2})^{-1} (\nabla^{Y} + \Gamma^{Y} \nabla^{Y} \Gamma^{Y}) \left( \begin{array}{clcr} 0 & -1  \\ -1 & 0 \end{array} \right)$
 ((\ref{E:3.15})),  $A_{1}$ is a trace class operator.
Hence, we have

\begin{eqnarray}   \label{E:4.35}
& & \ddet^{\ast}_{\Fr} \left( I + \frac{1}{2} \left( U_{{\widetilde \PP}}^{-1} {\widetilde K}_{1} + {\widetilde K}_{1}^{-1} U_{{\widetilde \PP}} \right) \right) \hspace{0.1 cm} = \hspace{0.1 cm}
\ddet^{\ast}_{\Fr} \left( I - \frac{1}{4} ( S K_{1} + K_{1} S)^{\ast} (S K_{1} + K_{1} S) \right) \nonumber  \\
& = & \ddet_{\Fr}^{\ast} \left( \begin{array}{clcr} I - A_{1}^{\ast} A_{1} & 0 \\ 0 & I - A_{1}^{\ast} A_{1} \end{array} \right)
\hspace{0.1 cm} = \hspace{0.1 cm}  \left( \ddet_{\Fr}^{\ast} \left( I - A_{1}^{\ast} A_{1} \right) \right)^{2}.
\end{eqnarray}

\noindent
Putting $K_{2}  =  \left( \begin{array}{clcr} A_{2} & B_{2} \\ C_{2} & D_{2} \end{array} \right)$,
the same method shows that

\begin{eqnarray}   \label{E:4.37}
\ddet^{\ast}_{\Fr} \left( I - \frac{1}{2} \left( U_{{\widetilde \PP}}^{-1} {\widetilde K}_{2} + {\widetilde K}_{2}^{-1} U_{{\widetilde \PP}} \right) \right) & = &
 \left( \ddet_{\Fr}^{\ast} \left( I - A_{2}^{\ast} A_{2} \right) \right)^{2}.
\end{eqnarray}

\noindent
In view of (\ref{E:4.21}) we note that

\begin{eqnarray}   \label{E:4.38}
{\widetilde K}_{1}^{-1} {\widetilde K}_{2} \left( \begin{array}{clcr} \omega_{1} \\ \omega_{2} \\ - \beta \Gamma^{Y} \omega_{1} \\ - \beta \Gamma^{Y} \omega_{2} \end{array} \right) & = &
\left( \begin{array}{clcr} K_{1}^{-1} K_{2} \frac{I - \beta \Gamma^{Y}}{2} \left( \begin{array}{clcr} \omega_{1} \\ \omega_{2} \end{array} \right) \hspace{0.1 cm} + \hspace{0.1 cm}
K_{1} K_{2}^{-1} \frac{ I + \beta \Gamma^{Y} }{2} \left( \begin{array}{clcr} \omega_{1} \\ \omega_{2} \end{array} \right)  \\
K_{1}^{-1} K_{2} \frac{ I - \beta \Gamma^{Y} }{2} \left( \begin{array}{clcr} \omega_{1} \\ \omega_{2} \end{array} \right) \hspace{0.1 cm} - \hspace{0.1 cm}
K_{1} K_{2}^{-1} \frac{ I + \beta \Gamma^{Y} }{2} \left( \begin{array}{clcr} \omega_{1} \\ \omega_{2} \end{array} \right) \end{array} \right)    \\
& = &
\left( \begin{array}{clcr} K_{1}^{-1} K_{2} \frac{I - \beta \Gamma^{Y}}{2} \left( \begin{array}{clcr} \omega_{1} \\ \omega_{2} \end{array} \right) \hspace{0.1 cm} + \hspace{0.1 cm}
K_{1} K_{2}^{-1} \frac{ I + \beta \Gamma^{Y} }{2} \left( \begin{array}{clcr} \omega_{1} \\ \omega_{2} \end{array} \right)  \\
- \beta \Gamma^{Y} \left\{ K_{1}^{-1} K_{2} \frac{ I - \beta \Gamma^{Y} }{2} \left( \begin{array}{clcr} \omega_{1} \\ \omega_{2} \end{array} \right) \hspace{0.1 cm} + \hspace{0.1 cm}
K_{1} K_{2}^{-1} \frac{ I + \beta \Gamma^{Y} }{2} \left( \begin{array}{clcr} \omega_{1} \\ \omega_{2} \end{array} \right)  \right\} \end{array} \right) ,  \nonumber
\end{eqnarray}

\noindent
which shows that

\begin{eqnarray}   \label{E:4.39}
\Psi^{-1} {\widetilde K}_{1}^{-1} {\widetilde K}_{2} \Psi  & = &
 K_{1}^{-1} K_{2} \frac{I - \beta \Gamma^{Y}}{2}  \hspace{0.1 cm} + \hspace{0.1 cm}
K_{1} K_{2}^{-1} \frac{ I + \beta \Gamma^{Y} }{2} \hspace{0.1 cm} = \hspace{0.1 cm}
\left( \begin{array}{clcr} K_{1}^{-1} K_{2} & 0 \\ 0 & K_{1} K_{2}^{-1} \end{array} \right).
\end{eqnarray}

\noindent
Hence, we have

\begin{eqnarray}   \label{E:4.40}
\ddet_{\Fr}^{\ast} \left( \frac{1}{2} \left( I - {\widetilde K}_{1}^{-1} {\widetilde K}_{2} \right) \right)  & = &
\mid \ddet_{\Fr}^{\ast} \left( \frac{1}{2} \left( I - K_{1}^{-1} K_{2} \right) \right)  \mid^{2} .
\end{eqnarray}

The same computation for an even dimensional case leads to the same result.
Summarizing the above argument, we have the following result, which is the main result of this section.

\vspace{0.2 cm}

\begin{theorem}   \label{Theorem:4.2}
Let $({\widehat M}, {\widehat g})$ be a closed Riemannian manifold and $Y$ be a hypersurface of ${\widehat M}$ with ${\widehat M} = M_{1} \cup_{Y} M_{2}$.
We assume that ${\widehat g}$ is a product metric near $Y$.
We denote the odd signature operator on ${\widehat M}$ by $\B_{{\widehat M}}$ and its restriction to $M_{1}$ and $M_{2}$ by
$\B_{M_{1}}$ and $\B_{M_{2}}$.  Then :

\begin{eqnarray*}
&  & \sum_{q=0}^{m} \left( \log \Det^{\ast} \B^{2}_{{\widehat M}, q} - \log \Det^{\ast} \B^{2}_{M_{1}, q, \rel} - \log \Det^{\ast} \B^{2}_{M_{2}, q, \Abs} \right)  \hspace{0.1 cm}  =  \hspace{0.1 cm}  - \log 2 \sum_{q=0}^{m} \left( \beta_{q}(M_{1}) + \beta_{q}(M_{2}) \right)   \nonumber  \\
& + &    2 \sum_{q=0}^{m} \log \ddet A_{0, q} \hspace{0.1 cm} - \hspace{0.1 cm} 2
\sum_{q=0}^{m} \left( \log \ddet V_{M_{1, q, \rel}} + \log \ddet V_{M_{2, q, \Abs}} \right)
\hspace{0.1 cm}  + \hspace{0.1 cm} 2 \log | \ddet^{\ast}_{\Fr} \left( \frac{1}{2} \left( I - K_{1}^{-1} K_{2} \right) \right) |   \nonumber \\
& - &  \left\{ \log \ddet^{\ast}_{\Fr} \left( I - A_{1}^{\ast} A_{1} \right) +
\log \ddet^{\ast}_{\Fr} \left( I - A_{2}^{\ast} A_{2} \right) \right\},
\end{eqnarray*}

\noindent
where $A_{1}, A_{2} : \Omega^{\bullet}(Y, E|_{Y})_{+} \rightarrow \Omega^{\bullet}(Y, E|_{Y})_{-}$ are first components of $K_{1}$ and $K_{2}$, respectively.
\end{theorem}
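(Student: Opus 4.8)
The plan is to read the statement directly off the master identity (\ref{E:4.21}), which already encodes the gluing formula for the doubled odd signature operator, rewritten purely in terms of data living on the single de Rham complex $\Omega^{\bullet}(M_{i}, E)$. Indeed, (\ref{E:4.21}) was produced by applying Theorem \ref{Theorem:2.5} to $\PP_{1} = {\widetilde \PP}$ and $\PP_{2} = I - {\widetilde \PP}$ and then using the block splitting (\ref{E:4.4}), the Poincar\'e duality identifications (\ref{E:4.15}), and the spectral computations (\ref{E:4.16})--(\ref{E:4.20}) to collapse all of the doubled zeta-values and metric matrices. Comparing (\ref{E:4.21}) with the desired conclusion, the sums involving $\beta_{q}$, $\ddet A_{0,q}$ and $\ddet V$ already coincide term by term; only the final three determinant factors still carry the doubled objects ${\widetilde K}_{1}$, ${\widetilde K}_{2}$, $U_{{\widetilde \PP}}$, so the whole task reduces to replacing each of these by its single-complex counterpart and collecting the numerical prefactors.

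First I would dispose of the off-diagonal factor. By (\ref{E:4.40}) the doubled modified Fredholm determinant of $\frac{1}{2}( I - {\widetilde K}_{1}^{-1} {\widetilde K}_{2} )$ equals the square modulus of $\ddet^{\ast}_{\Fr}( \frac{1}{2}( I - K_{1}^{-1} K_{2} ) )$, so taking logarithms turns the coefficient $1$ carried in (\ref{E:4.21}) into the coefficient $2$ of the statement. Next I would treat the two remaining terms using (\ref{E:4.35}) and (\ref{E:4.37}): each doubled determinant $\ddet^{\ast}_{\Fr}( I \pm \frac{1}{2}( U_{{\widetilde \PP}}^{-1} {\widetilde K}_{i} + {\widetilde K}_{i}^{-1} U_{{\widetilde \PP}} ) )$ factors as $( \ddet^{\ast}_{\Fr}( I - A_{i}^{\ast} A_{i} ) )^{2}$, where $A_{i}$ is the first block of $K_{i}$. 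The prefactor $-\frac{1}{2}$ in (\ref{E:4.21}) then exactly absorbs the square, producing $-\{ \log \ddet^{\ast}_{\Fr}( I - A_{1}^{\ast} A_{1} ) + \log \ddet^{\ast}_{\Fr}( I - A_{2}^{\ast} A_{2} ) \}$. Substituting these two reductions into (\ref{E:4.21}) yields the theorem verbatim.

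The genuine content, and the step I expect to be the main obstacle, is not this bookkeeping but the three factorizations (\ref{E:4.35}), (\ref{E:4.37}) and (\ref{E:4.40}) on which it rests. The mechanism is the explicit doubling description (\ref{E:4.24}) of ${\widetilde K}_{i}$, which acts as $K_{i}$ on the $(-1)$-eigenspace of $\beta \Gamma^{Y}$ and as $K_{i}^{-1}$ on the $(+1)$-eigenspace; after conjugating by the isomorphism $\Psi$ of (\ref{E:4.26}) one returns to operators on $\Omega^{\bullet}(M, E)|_{Y}$, and the key algebraic input is that $K_{i}$ commutes with $\Gamma = i \beta \Gamma^{Y} \left( \begin{array}{clcr} 0 & -1 \\ 1 & 0 \end{array} \right)$. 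Writing $K_{i}$ in block form (\ref{E:4.31}) with respect to the decomposition (\ref{E:4.99}), this commutation forces the relations (\ref{E:4.33}), namely $B_{i} = C_{i}$ and $A_{i} = - D_{i}$, which collapse $S K_{i} + K_{i} S$ to the diagonal operator $\operatorname{diag}( 2 A_{i}, 2 A_{i} )$. It is precisely this block-diagonalization that makes every doubled determinant split as a product of two identical single-complex determinants.

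Finally I would confirm that the argument persists in even dimensions. The eigenspace descriptions (\ref{E:4.9}) and (\ref{E:4.23}), and hence the block identities, were derived under the standing assumption that ${\widehat M}$ is odd dimensional, so the even case requires re-deriving the corresponding $\pm i$-eigenspace descriptions of ${\widetilde G}$ and checking that the commutation $K_{i} \Gamma = \Gamma K_{i}$ and the resulting relations (\ref{E:4.33}) survive unchanged. Once this is verified the three factorizations and the assembly above carry over without modification, which is exactly the content of the remark that the same computation for an even dimensional case leads to the same result.
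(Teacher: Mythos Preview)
Your proposal is correct and follows precisely the paper's own argument: the theorem is obtained from the intermediate identity (\ref{E:4.21}) by substituting the three factorizations (\ref{E:4.35}), (\ref{E:4.37}) and (\ref{E:4.40}), whose proofs rest on the explicit description (\ref{E:4.24}) of ${\widetilde K}_{i}$, the conjugation by $\Psi$, and the block relations (\ref{E:4.33}) forced by $K_{i}\Gamma = \Gamma K_{i}$. Your identification of the bookkeeping, the genuine algebraic content, and the need to redo the eigenspace computations in even dimensions matches the paper exactly.
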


\vspace{0.2 cm}

\noindent
{\it Remark} : (1) If all cohomologies vanish, {\it i.e.} $H^{\ast}(M ; E) = H^{\ast}(M_{i} ; E) = H^{\ast}(M_{i}, Y ; E) = 0$, then the first three terms in Theorem \ref{Theorem:1.1}
and Theorem \ref{Theorem:4.2} do not appear.   \newline
(2) So far we don't know how to describe
$\left( \log \Det^{\ast} \B^{2}_{{\widehat M}, q} - \log \Det^{\ast} \B^{2}_{M_{1}, q, {\mathcal P}_{-, {\mathcal L}_{0}}} -
\log \Det^{\ast} \B^{2}_{M_{2}, q, {\mathcal P}_{+, {\mathcal L}_{1}}} \right)$ and
$\left( \log \Det^{\ast} \B^{2}_{{\widehat M}, q} - \log \Det^{\ast} \B^{2}_{M_{1}, q, \rel} - \log \Det^{\ast} \B^{2}_{M_{2}, q, \Abs} \right)$
for each single $q$.

\vspace{0.5 cm}

\end{document}